\documentclass[11pt,twoside]{article}
\usepackage{bbm}
\usepackage{mathrsfs}
\usepackage{amssymb}
\usepackage{amsmath}
\usepackage{amsthm}
\usepackage{amsfonts}
\usepackage{color}
\usepackage{graphicx}
\usepackage[active]{srcltx}
\usepackage{CJK}
\usepackage{geometry}
\geometry{left=1.5cm,right=1.5cm,top=2cm,bottom=1cm}
\usepackage[cp1252]{inputenc}

\usepackage{mathrsfs}
\usepackage{graphicx}

\usepackage[active]{srcltx}

\textwidth 169truemm
\textheight 226truemm
\oddsidemargin -1.0mm
\evensidemargin -1.0mm
\topmargin -10mm
\headsep 6mm
\footskip 11mm
\baselineskip 4.5mm

\allowdisplaybreaks


\usepackage{titletoc}
\titlecontents{section}[0pt]{\addvspace{2pt}\filright}
              {\contentspush{\thecontentslabel\ }}
              {}{\titlerule*[8pt]{.}\contentspage}

\pagestyle{headings}
\setlength{\topmargin}{-1.5cm}
\setlength{\headsep}{0.3cm}
\setlength{\footskip}{0.6cm}

\def\rr{{\mathbb R}}

\def\zz{{\mathbb Z}}
\def\nn{{\mathbb N}}

\def\ch{\mathcal H}

\def\cu{{\mathcal U}}

\def\ct{{\mathcal T}}

\def\ci{\mathcal I}

\def\bfc{\textbf{C}}

\def\fz{\infty}
\def\az{\alpha}

\def\lz{\lambda}
\def\dz{\delta}
\def\bdz{\Delta}
\def\ez{\epsilon}

\def\bz{\beta}

\def\gz{{\gamma}}
\def\oz{{\omega}}

\def\sz{\sigma}

\def\wz{\widetilde}

\def\bint{{\ifinner\rlap{\bf\kern.35em--}
\int\else\rlap{\bf\kern.45em--}\int\fi}\ignorespaces}

\def\bbint{{\ifinner\rlap{\bf\kern.35em--}
\hspace{0.078cm}\int\else\rlap{\bf\kern.45em--}\int\fi}\ignorespaces}

\def\lr{\right}
\def\lf{\left}

\newtheorem{thm}{Theorem}[section]
\newtheorem{lem}[thm]{Lemma}
\newtheorem{rem}[thm]{Remark}
\newtheorem{defn}[thm]{Definition}
\newtheorem{example}[thm]{Example}

\numberwithin{equation}{section}

\begin{document}

\arraycolsep=1pt

\title{\Large\bf
Dimension Estimates on Circular $(s,t)$-Furstenberg Sets
\footnotetext{\hspace{-0.35cm}
\endgraf
 2010 {\it Mathematics Subject Classification:} Primary 28A75; Secondary 28A78 $\cdot$ 28A80
 \endgraf {\it Key words and phrases:}    Furstenberg set, circular Furstenberg set, Hausdorff dimension.
 \endgraf J. L. is supported by the Academy of Finland via the projects: Quantitative rectifiability in Euclidean and non-Euclidean spaces, Grant No. 314172, and Singular integrals, harmonic functions, and boundary regularity in Heisenberg groups,
Grant No. 328846.
}
}
\author{Jiayin Liu}
\date{}
\maketitle

\begin{center}
\begin{minipage}{13.5cm}\small
{\noindent{\bf Abstract.}
In this paper, we show that circular $(s,t)$-Furstenberg sets in $\mathbb R^2$ have Hausdorff dimension at least $$\max\{\frac{t}3+s,(2t+1)s-t\} \text{ for all $0<s,t\le 1$}.$$

This result extends the previous dimension estimates on circular Kakeya sets by Wolff.
}
\end{minipage}
\end{center}


\section{Introduction}
Let $F$ be a circular $(s,t)$-Furstenberg set in $\rr^2$. That is, there exists a \textit{parameter set} $K \subset \rr^3_+$ with Hausdorff dimension
$$  \dim_\ch K \ge t $$
such that for every $(x,r) \in K$,
\begin{equation}\label{as1}
  \dim_\ch(F \cap S(x,r)) \ge s
\end{equation}
where $\rr^3_+:=\{(x,r)=(x_1,x_2,r) \ | \ r>0 \}$ and $S(x,r)$ is the circle centered at $x \in \rr^2$ with radius $r$.
A special class of circular $(1,1)$-Furstenberg sets is the family of circular Kakeya sets, that is, Borel sets in $\rr^2$ that contain circles of every radius.

\medskip

The study on the Hausdorff dimension of Furstenberg sets was initiated from their linear version. In this paper, we call a set $F \subset \rr^2$ a linear $(s,t)$-Furstenberg set if
there exists a parameter set $K$ in $A(2,1)$ with
$$  \dim_\ch K \ge t $$
such that for every $L \in K$,
\begin{equation*}
  \dim_\ch(F \cap L) \ge s
\end{equation*}
where $A(n,k)$ denotes the family of $k$-dimensional affine subspaces in $\rr^n$.

In 1999, Wolff \cite{w99} showed that linear $(s,1)$-Furstenberg sets with parameter set $K$ containing lines in every direction have Hausdorff dimension at least
\begin{equation}\label{wolfflin}
  \max\{ \frac12+s, 2s \}    \mbox{ for all $0<s \le 1$}.
\end{equation}
In the sequel, there is a series of works improving the above lower bound and providing the one for linear $(s,t)$-Furstenberg sets with some of them only considering special values of $s,t$. We refer the readers to \cite{kt01,b03,mr12,ls17,t17,hsy20,bz21,dov21,ts21} and references therein.
Moreover, in higher dimensions, one can similarly define linear $(s,t)$-Furstenberg sets with parameter set $K$ in $A(n,k)$. See \cite{h19,hkm19} for some recent progress.

It is not clear whether the above lower bound estimates on the Hausdorff dimension for linear $(s,t)$-Furstenberg sets in $\rr^2$ are sharp for any value of $s$ and $t$ except $s=1$. Hence determining the sharp lower bound remains open for Hausdorff dimension of linear $(s,t)$-Furstenberg sets.

In terms of circular $(s,t)$-Furstenberg sets in $\rr^2$, Wolff in \cite[Corollary 3]{w00} showed that circular Kakeya sets in $\rr^2$ have full dimension $2$ employing techniques from harmonic analysis. Also, in \cite[Corollary 3]{w97}, Wolff proved that Borel sets in $\rr^2$ consisting of circles with $t$-dimensional set of centers have Hausdorff dimension at least $1+t$.  Later, in \cite{kov17}, as an application of their techniques to prove a Marstrand-type restricted projection theorem, K\"{a}enm\"{a}ki-Orponen-Venieri were able to show that the above lower bound $1+t$ in \cite{w97} holds true for analytic $t$-dimensional family of circles.  Hence they provide an alternative method showing the dimension of sets containing full circles. Since the above results concern special cases of circular $(1,t)$-Furstenberg sets, these bounds are sharp. To the best of the author's knowledge, these works and earlier results on families of full circles are the only ones concerning the Hausdorff dimension for circular Furstenberg sets.

In this paper, we extend the existing result to general $0<s,t\le1$. We show the following:

\begin{thm} \label{main1}
     For any $0<s\le 1$ and $0<t\le 1$,
    the Hausdorff dimension of a circular $(s,t)$-Furstenberg set $F$ in $\rr^2$ is at least
    \begin{equation}\label{bound}
      \max\{\frac{t}3+s,(2s-1)t+s\}.
    \end{equation}
\end{thm}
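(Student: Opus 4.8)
\textbf{Proof strategy for Theorem \ref{main1}.}
The plan is to prove the two lower bounds in \eqref{bound} separately, since they dominate in different ranges of $(s,t)$. The bound $\frac t3+s$ should come from a \emph{point-counting / incidence} argument, while $(2s-1)t+s$ should come from a \emph{duality with a projection-type estimate}. For the first bound, I would fix $\delta>0$ small and suppose $F$ is covered by $N=\delta^{-\alpha}$ balls of radius $\delta$; the goal is to force $\alpha\ge \frac t3+s$. Passing to a $\delta$-separated subset of the parameter set $K$ of cardinality $\gtrsim \delta^{-t}$ (using a standard pigeonholing so that all the circles $S(x,r)$ in this subset lie in a bounded region and have comparable radius), each circle $S(x,r)$ in $K$ carries a $\delta$-separated set of $\gtrsim \delta^{-s}$ points of the $\delta$-neighborhood of $F$. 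One then counts incidences between the $\delta^{-t}$ circles and the $\delta^{-\alpha}$ balls. The key geometric input is Wolff's estimate on the number of $\delta$-tangencies / the bound that two $\delta$-neighborhoods of circles with $\delta$-separated parameters meet in a set of diameter $\lesssim \delta/(\text{separation})$; combined with a Cauchy--Schwarz (or higher-moment) count on the number of triple incidences, this yields $\delta^{-t}\delta^{-s}\lesssim (\text{incidences}) \lesssim \delta^{-\alpha}\cdot(\cdots)$, and after optimizing the exponent bookkeeping one extracts $\alpha\ge \frac t3+s$. The factor $\frac13$ is exactly the signature of the three-parameter family of circles together with the quadratic (cinematic curvature) nondegeneracy, mirroring the $\frac12$ in Wolff's linear bound \eqref{wolfflin}.

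For the second bound $(2s-1)t+s$, which is the one that improves on the trivial estimate when $s$ is close to $1$, I would use the point-line (here point-circle) duality: a circle $S(x,r)$ in $\rr^2$ corresponds to a point $(x,r)\in\rr^3_+$, and the condition ``$y\in S(x,r)$'' is, for fixed $y$, a surface (a ``cone'') in the $(x,r)$-space. Thus the parameter set $K$ meets, for each of the $\gtrsim\delta^{-s}$ points $y$ on a given circle, a corresponding $\delta$-neighborhood of a surface; dualizing the roles of $F$ and $K$ turns the problem into estimating the dimension of $K$ in terms of how $F$ projects, and running the argument of Wolff \cite{w97}/K\"aenm\"aki--Orponen--Venieri \cite{kov17} (which gives $\dim F\ge 1+t$ in the case $s=1$) with an $s$-dimensional, rather than full, circular slice. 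Concretely I expect to bound the $\delta$-covering number of $F$ from below by running an $L^2$ ``bush/hairbrush'' argument: pick a ball $B$ hit by many ($\gtrsim \delta^{-\beta}$) of the $\delta$-tubular circles, form the bush of these circles, and use the tangency bound to show the bush alone has $\gtrsim \delta^{-\beta}\cdot\delta^{-s}$ essentially disjoint pieces up to bounded overlap $\delta^{-(1-s)\cdot(\text{something})}$; the exponent $(2s-1)t+s$ falls out of balancing the bush contribution against the remaining circles.

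The main obstacle, as in all Furstenberg-type arguments, is controlling the overlap in the incidence count: the tangency estimate for circles is available (this is precisely Wolff's circular-tangency lemma, which should be quoted from the earlier part of the paper), but using it efficiently requires first \emph{regularizing} the configuration — pigeonholing so that every circle in the working family carries the \emph{same} number $\sim\delta^{-s}$ of $\delta$-points of $F$, all radii and centers live at a single dyadic scale, and the multiplicity function of the circle family is roughly constant. This regularization (a Fubini/pigeonhole scheme over dyadic scales of $K$ and of $F$, together with a reduction from Hausdorff dimension to $\delta$-discretized statements via a Frostman measure on $K$) is where the bulk of the technical work lies; once the configuration is clean, both bounds reduce to the two-moment incidence inequality plus the tangency lemma, exactly paralleling Wolff's treatment of the linear case and of circular Kakeya sets. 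I would therefore organize the write-up as: (i) reduction to a $\delta$-discretized incidence statement; (ii) the regularization/pigeonholing lemma; (iii) proof of the $\frac t3+s$ bound via Cauchy--Schwarz and the tangency lemma; (iv) proof of the $(2s-1)t+s$ bound via the bush argument; (v) assembling the two to conclude \eqref{bound}.
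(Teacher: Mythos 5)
Your high-level architecture matches the paper's: the two bounds are proved separately, the first by a ``three points determine a circle'' counting argument in the spirit of Wolff's $\frac12+s$ bound for linear Furstenberg sets, and the second by importing the tangency/multiplicity machinery of Wolff \cite{w97} and K\"aenm\"aki--Orponen--Venieri \cite{kov17}. However, both halves of your sketch have gaps at exactly the points where the exponents are actually produced. For the $\frac t3+s$ bound, the correct count is over \emph{ordered triples} of $\delta$-balls per circle: each circle carries $\gtrsim\delta^{-3s}$ such triples, the total over the $\delta^{-t}$ circles is $\gtrsim\delta^{-3s-t}$, and the three-circle lemma caps the number of circles compatible with a fixed triple, so $N^3\gtrsim\delta^{-3s-t}$. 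Your displayed inequality $\delta^{-t}\delta^{-s}\lesssim(\text{incidences})\lesssim\delta^{-\alpha}\cdot(\cdots)$ is first-moment bookkeeping and cannot yield the exponent $\frac t3$. More seriously, the three-circle lemma (the paper's Lemma \ref{geom3}) bounds the parameter set by $\lesssim a/c^2$ where $c$ is the mutual separation of the three points; if the three balls on a circle are allowed to be only $\delta$-separated this bound is vacuous. The paper handles this by a pigeonholing step you do not have: each circle is cut into arcs of length $\gamma\sim(\mathcal H^{s'}_\infty(S(z)\cap F_{k_1}))^{1/s'}$ and three arcs $h_z^+,h_z^-,h_z^\times$, pairwise $\tau$-separated with $\tau$ only polylogarithmically small in $\delta$, are extracted so that each still carries a fixed fraction of the content. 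Your ``regularization'' step (equal point counts, dyadic radii) does not address this separation issue.

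For the $(2s-1)t+s$ bound, the mechanism you propose (a bush/hairbrush argument around a high-multiplicity ball) is not what the paper does, and you give no exponent computation showing it produces $(2s-1)t+s$; as stated it is a plan, not an argument. The paper instead puts a Frostman measure $\mu$ on the $(\delta,t')$-set $V$ of parameters and applies \cite[Lemma 5.1]{kov17} directly with the specific choice $\lambda\sim\delta^{1-s'}(\log\frac1\delta)^{-2}$: the point is that the $\delta$-neighborhood of $S(z)\cap F$ has Lebesgue measure $\gtrsim\delta^{2-s'}$ (because the slice has $s'$-dimensional content $\gtrsim(\log\frac1\delta)^{-2}$), so with this $\lambda$ the exceptional high-multiplicity set of measure $\le\lambda|S^\delta(z)|\sim\lambda\delta$ removes at most half of it. On the surviving low-multiplicity portion each point lies on $\lesssim\delta^{t'(2s'-2)}$ of the circles, and the bound follows from $N\delta^2\gtrsim|\bigcup_z S_2^\delta(z)|\gtrsim\delta^{t'(2-2s')}\cdot\delta^{-t'}\cdot\delta^{2-s'}$ up to logarithms. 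This calibration of $\lambda$ against the measure of the $s$-dimensional slice is the step that converts the $s=1$ result of \cite{w97,kov17} into the Furstenberg bound, and it is absent from your outline.
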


We remark that for any $0<t \le 1$, if $0<s\le \frac23$, then the maximum in \eqref{bound} is attained by $\frac{t}3+s$. Otherwise, it is achieved by $(2s-1)t+s$.
Indeed, these two bounds are obtained by different approaches. Hence Theorem \ref{main1} is a combination of the following two theorems.

\begin{thm} \label{main2}
     For any $0<s\le 1$ and $0<t\le 1$,
    the Hausdorff dimension of a circular $(s,t)$-Furstenberg set $F$ in $\rr^2$ is at least
    \begin{equation*}
      \frac{t}3+s.
    \end{equation*}
\end{thm}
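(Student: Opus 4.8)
The plan is to run a Wolff-type circular-tangency argument adapted to the fractal parameter set $K$, producing the bound $\dim_\ch F \ge t/3 + s$ via a discretized incidence counting. Fix $\delta > 0$ dyadic, and let $\{(x_i,r_i)\}_{i=1}^M$ be a $\delta$-separated subset of $K$ of cardinality $M \approx \delta^{-t}$; without loss of generality (by pigeonholing on the scales of the radii and translating/dilating) we may assume all $r_i \in [1,2]$, and that the centers $x_i$ lie in a fixed bounded region. For each $i$, assumption \eqref{as1} gives a $\delta$-separated set $P_i \subset F \cap S(x_i,r_i)$ with $\#P_i \gtrsim \delta^{-s}$ (up to logarithmic losses one may pass to $(s,t)$-sets; I would set up the standard single-scale reduction here). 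Let $N = \#\{\delta\text{-squares meeting } F\}$, so that it suffices to show $N \gtrsim \delta^{-(t/3+s)}$ modulo $\delta^{-o(1)}$ factors. Double-counting incidences between the $N$ $\delta$-squares and the $M$ $\delta$-annuli $A(x_i,r_i) = \{y : |\,|y-x_i| - r_i\,| \le \delta\}$, we have on one hand $\sum_i \#\{Q : Q \cap A_i \ne \emptyset,\ Q \cap F \ne \emptyset\} \gtrsim M\delta^{-s}$.

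The heart of the matter is the \emph{upper} bound on this incidence sum, and this is where Wolff's circular tangency estimate enters. The key geometric input is that two $\delta$-annuli $A_i, A_j$ whose centers are $\sim \rho$ apart intersect in a region of area $\lesssim \delta^2/(\rho \cdot \text{(angle)})$ type — more precisely, Wolff's bound says that for a family of $\delta$-annuli, the number of pairs that are $\delta$-\emph{tangent} (intersecting in an arc of length $\gtrsim 1$) is controlled, and non-tangent pairs meet in $O(1)$ many $\delta$-squares. I would organize the count by first bounding, for a typical $\delta$-square $Q$, the number $\mu(Q) = \#\{i : Q \cap A_i \ne \emptyset\}$ of annuli through it; then $\sum_i \#\{Q \subset F : Q \cap A_i\} = \sum_{Q \subset F} \mu(Q) \le (\max_Q \mu(Q)) \cdot N$ is too lossy, so instead I would use an $L^2$ / high-multiplicity decomposition: let $N_k = \#\{Q : \mu(Q) \sim 2^k\}$ and estimate $\sum_k 2^k N_k$ against $F$. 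Bounding $\sum_i \#P_i \le \sum_k 2^k \min\{N, N_k\}$ and invoking Wolff's estimate $\sum_{Q} \mu(Q)^2 \lesssim$ (tangency count) $\lesssim M^{3/2}\delta^{-1} + M\delta^{-?}$ gives, after optimizing, the exponent $t/3$ gain over the trivial bound $N \gtrsim \delta^{-s}$. The cube root is the signature of the tangency exponent $3/2$ appearing in Wolff's three-lines/circles lemma; tracking it through the fractal ($t < 1$) bookkeeping is the delicate part.

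Concretely, the steps in order are: (1) single-scale discretization and pigeonholing to reduce Theorem \ref{main2} to showing $N \gtrsim \delta^{-(t/3+s) + o(1)}$ for the $\delta$-covering number $N$ of $F$; (2) set up the bipartite incidence system between $\delta$-squares hitting $F$ and the $\approx \delta^{-t}$ annuli, recording $M\delta^{-s}$ as the lower bound for total incidences; (3) state and apply Wolff's circular tangency/incidence bound (from \cite{w00,w97}) in the form of a bound on $\sum_{Q} \binom{\mu(Q)}{2}$ by the number of $\delta$-tangent annulus pairs plus a lower-order term; (4) run the dyadic-pigeonholing on the multiplicity function $\mu$ and optimize the resulting two-term inequality in the threshold $2^k$ to extract the exponent; (5) assemble and let $\delta \to 0$ to conclude via the standard relation between $\delta$-covering numbers and Hausdorff dimension. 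The main obstacle I anticipate is step (3)–(4): Wolff's tangency estimate is naturally stated for $\delta^{-1}$-many annuli (the Kakeya, $t=1$ case), and one must either re-derive it with the correct dependence on $M = \delta^{-t}$ or carefully quote a version that is uniform in the number of annuli, then feed the $(s,t)$-set separation structure of $K$ (not just its cardinality at one scale) into the tangency count so that the gain is genuinely $t/3$ and not merely $1/3$. A secondary technical point is handling the case where the radii $r_i$ are not comparable — a dyadic decomposition of $K$ by radius scale, followed by rescaling each piece, should reduce to the comparable case, but one must check the dimension bookkeeping survives the union over $O(\log(1/\delta))$ scales.
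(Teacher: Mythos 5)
Your overall architecture (single-scale discretization, incidence counting between $\delta$-squares and $\delta$-annuli, pigeonholing, passing back to Hausdorff dimension) matches the paper's, but the engine you propose in steps (3)--(4) is the wrong one and would not produce the exponent $\frac{t}{3}+s$. You run a second-moment argument: Cauchy--Schwarz gives $N \ge I^2/\sum_Q \mu(Q)^2$ with $I \gtrsim M\delta^{-s}$, and you bound $\sum_Q \mu(Q)^2$ by a tangency count. Even granting the strongest plausible form of Wolff's tangency estimate, the three contributions to $\sum_Q\mu(Q)^2$ (diagonal $\approx M\delta^{-1}$, transverse pairs $\approx M^2$, tangent pairs $\approx M^{3/2}\delta^{-1/2}$) yield at best $N \gtrsim \delta^{-\min\{2s,\,2s+t-1,\,2s+t/2-1/2\}}$, which for small $s$ falls well short of $\delta^{-(t/3+s)}$ (e.g.\ $s=0.1$, $t=1$ gives exponent $0.2$ versus $t/3+s\approx 0.43$). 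The cube root is not ``the signature of the tangency exponent $3/2$''; it is the signature of a \emph{third}-moment count.

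The paper's actual mechanism is elementary and requires no tangency estimate from \cite{w00} or \cite{w97} (that machinery enters only in Theorem~\ref{main3}, via \cite{kov17}). After the pigeonholing you describe, one extracts from each circle $S(z)$, $z\in V$ (a $(\delta,t')$-set with $\#V\approx\delta^{-t'}$), three $\tau$-separated arcs $h_z^+,h_z^-,h_z^\times$, each still carrying $\mathcal{H}^{s'}_\infty$-content $\gtrsim(\log\frac1\delta)^{-2}$ of $F$, and counts quadruples $(B_{i_+},B_{i_-},B_{i_\times},z)$ in which the three covering balls meet the three arcs respectively. The count is bounded below by $\#V\cdot\delta^{-3s'}$ (each arc needs $\gtrsim\delta^{-s'}$ balls) and above by $(\#\mathcal{I}_{k_1})^3\tau^{-6}$, because a triple of pairwise $\tau$-separated $\delta$-balls is simultaneously incident to only $O(\tau^{-6})$ circles of the $\delta$-separated family $V$: by Lemma~\ref{geom3}, the admissible parameters $(x,b)$ lie in a set of diameter $\lesssim\delta/\tau^2$ (``three non-collinear points determine a circle,'' proved by intersecting the three annuli with rectangles around perpendicular bisectors). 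Comparing the two bounds gives $(\#\mathcal{I}_{k_1})^3\gtrsim\delta^{-3s'-t'}$ up to logarithmic factors, hence $\#\mathcal{I}_{k_1}\gtrsim\delta^{-(s'+t'/3)}$ --- the exact circular analogue of Wolff's $\frac12+s$ bound for linear Furstenberg sets, where two points determine a line. To repair your proposal, replace the $L^2$/tangency step by this triple count, using the separation of the three arcs to make the ``unique circle'' lemma applicable.
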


\begin{thm} \label{main3}
     For any $\frac12<s\le 1$ and $0<t\le 1$,
    the Hausdorff dimension of a circular $(s,t)$-Furstenberg set $F$ in $\rr^2$ is at least
    \begin{equation*}
      (2s-1)t+s.
    \end{equation*}
\end{thm}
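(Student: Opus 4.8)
The plan is to prove Theorem \ref{main3}, the bound $\dim_\ch F \ge (2s-1)t + s$ for $\tfrac12 < s \le 1$, by a $\delta$-discretized incidence argument in the spirit of Wolff's circular maximal function approach, transplanted to the Furstenberg setting. First I would fix a small scale $\delta>0$ and replace $F$ by a maximal $\delta$-separated subset, and replace the parameter set $K \subset \rr^3_+$ by a $(\delta,t)$-set of circles $\{S(x_j,r_j)\}_{j}$ of cardinality roughly $\delta^{-t}$, using a Frostman measure on $K$ (one may localize so that all radii lie in a fixed dyadic range, losing only a logarithmic factor). For each such circle, assumption \eqref{as1} gives, via a Frostman measure on $F \cap S(x_j,r_j)$, a $\delta$-separated subset of $F$ of cardinality at least $\delta^{-s}$ lying within the $\delta$-neighborhood (an annulus) of that circle. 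The goal is then to show $\#(F)_\delta \gtrsim \delta^{-(2s-1)t - s + \epsilon}$ after covering $F$ by $\delta$-balls.

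The heart of the argument is a bound on circle-circle incidences: for two $\delta$-annuli $S^\delta(x,r)$ and $S^\delta(x',r')$ whose centers and radii are $\Delta$-separated (for $\delta \le \Delta \le 1$), the intersection $S^\delta(x,r) \cap S^\delta(x',r')$ is contained in a union of boundedly many pieces, each of diameter $\lesssim \delta/\Delta$, so it can be covered by $\lesssim \delta/(\Delta \cdot \delta) \cdot (\text{something}) $ — more precisely the standard estimate is that two $\delta$-separated circles meet in $\lesssim (\delta/\Delta)^{1/2} \cdot \delta^{-1} \cdot \delta = (\delta/\Delta)^{1/2}$... here I would invoke the Wolff-type tangency/transversality dichotomy: if the two circles are $\Delta$-transverse they meet in a set of measure $\lesssim \delta^2/\Delta$ (so $\lesssim \delta/\Delta$ many $\delta$-balls), while the set of pairs that are $\Delta$-tangent is small because tangency to a fixed circle is a codimension-one condition on the three-parameter space of circles. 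The plan is to run a double-counting / Cauchy–Schwarz argument: write $N = \#(F)_\delta$, and count pairs $(p, j)$ with $p \in S^\delta(x_j,r_j)$; this is $\gtrsim \delta^{-t} \cdot \delta^{-s}$ from below, and from above it is $\sum_p \#\{j : p \in S^\delta(x_j,r_j)\}$, so some point $p$ lies in $\gtrsim \delta^{-t-s}/N$ annuli. Then one estimates how many $\delta$-separated circles from a $(\delta,t)$-set can simultaneously pass $\delta$-close to a single point — a ``dual'' $(\delta,t)$-type set of circles through a point — and feeds the tangency bound into a bush or two-ends argument to extract the exponent. The clean way to organize the exponent bookkeeping is to dyadically decompose the multiplicity function $p \mapsto \#\{j : p \in S^\delta(x_j,r_j)\}$ and bound, for each dyadic level $\mu$, the measure of the set where the multiplicity is $\sim \mu$; summing against the Frostman data and optimizing over the decomposition should yield the claimed $(2s-1)t + s$.

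The main obstacle I expect is controlling the \emph{tangential} interactions: unlike the linear Furstenberg problem, two circles can be internally or externally tangent, and near a tangency point two $\delta$-annuli overlap in a region of size $\delta \times \sqrt{\delta}$ rather than $\delta \times \delta$, which inflates incidence counts. Handling this requires either Wolff's circular Kakeya machinery (the three-parameter tangency estimate saying that the set of circles tangent to a given circle, within a $(\delta,t)$-set, is itself a thin set) or a pigeonholing onto a ``two-ends'' subconfiguration where no small arc of $F \cap S(x_j,r_j)$ carries a disproportionate share of the incidences, so that the effective interactions become transverse. A secondary technical point is ensuring the parameter set $K$, which is only assumed to have Hausdorff dimension $\ge t$ with no Ahlfors-regularity, can be discretized into a genuine $(\delta,t)$-set uniformly across scales; this is standard but needs a pigeonhole over dyadic radius-scales and over the Frostman exponent, at the cost of $\delta^{-\epsilon}$ factors that are absorbed into the $+\,\epsilon$ loss and then removed by letting $\delta \to 0$. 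Once the transverse incidence bound is in place, the passage from the $\delta$-discretized count to the Hausdorff dimension lower bound is routine via a standard limiting argument over $\delta = 2^{-k}$.
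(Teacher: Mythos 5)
Your overall architecture (discretize $K$ into a $(\delta,t)$-set, extract $\gtrsim \delta^{-s}$ points per circle, bound the multiplicity of the $\delta$-annuli, divide) matches the paper's, but the proposal has a genuine gap exactly where the theorem lives: you never actually obtain a usable multiplicity bound, and the route you sketch would not produce the exponent $(2s-1)t+s$. A Cauchy--Schwarz double count on incidence pairs followed by a bush at a single high-multiplicity point, or a dyadic decomposition of the multiplicity ``summed against the Frostman data and optimized'', only exploits pairwise intersection estimates; even granting the transverse bound $|S^\delta(z)\cap S^\delta(z')|\lesssim \delta^2/\Delta$ and some control of tangencies, such $L^2$-type arguments cap out at bounds of the flavour of Theorem \ref{main2}, not $(2s-1)t+s$. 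What is actually needed is a weak-type inequality for the multiplicity function $m^\mu_\delta(w)=\mu\{z: w\in S^\delta(z)\}$: for all but a small $\mu$-measure of circles $z$, the subset of $S^\delta(z)$ where $m^\mu_\delta\ge A^t\lambda^{-2t}\delta^t$ has measure at most $\lambda|S^\delta(z)|$, uniformly in $\lambda\in(0,1]$. This is Lemma 5.1 of K\"aenm\"aki--Orponen--Venieri (a repackaging of Schlag's lemma and of Wolff's main lemma for circles), which the paper invokes as a black box rather than re-deriving the tangency analysis. You name ``Wolff's circular Kakeya machinery'' as a possible ingredient, but you neither state the estimate in the form required nor show how the exponent would follow, and your intermediate incidence computation (``two $\delta$-separated circles meet in $\lesssim(\delta/\Delta)^{1/2}$\dots'') is garbled.

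A second missing idea is how the parameter $s$ enters the multiplicity analysis at all. In the paper the Furstenberg hypothesis is used twice: once to get $\gtrsim\delta^{-s}$ balls per circle, and once --- crucially --- to show that the $\delta$-neighbourhood of $S(z)\cap F_{k_1}$ has Lebesgue measure $\gtrsim\delta^{2-s}$ up to logarithms (inequality \eqref{sim2}), which permits taking $\lambda\sim\delta^{1-s}$ in the weak-type inequality while still guaranteeing that at least half of that neighbourhood consists of low-multiplicity points. With this choice the multiplicity threshold becomes $\sim\delta^{t(2s-1)}$ in measure, i.e.\ $\sim\delta^{t(2s-2)}$ circles of $V$ through a typical point, and dividing $\delta^{-s}\cdot\#V$ by this overlap yields $\#\ci_{k_1}\gtrsim\delta^{-((2t+1)s-t)}$. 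Your sketch treats the $s$-dimensional intersections only as a source of $\delta^{-s}$ points and offers no mechanism by which $s$ would appear inside the tangency/multiplicity bound; without the $\lambda^{-2t}$ dependence of the threshold and the choice $\lambda\sim\delta^{1-s}$, the claimed exponent does not emerge from any amount of dyadic bookkeeping.
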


Below, we briefly outline our ideas of the proof of Theorem \ref{main2} and Theorem \ref{main3}, which will imply Theorem \ref{main1}.
Here, we will focus on explaining some informal ideas on obtaining the Minkowski dimension lower bounds for circular Furstenberg sets. Then we can derive the Hausdorff dimension lower bounds from the Minkowski dimension lower bounds in a standard way. To this end, in the proof, we
will work with a discretized version of the circular $(s,t)$-Furstenberg set $F$ in the following sense. That is, instead of studying the $t$ dimensional parameter set $K$, we will concentrate on a finite subset $V \subset K$ which is a $(\dz, t)$-set (See Definition \ref{def2}). In brief, $V$ is a $\dz$-separated set with cardinality $\dz^{-t}$ and satisfies a $t$-dimensional non-concentration condition.

With this discretized circular Furstenberg set $\cup_{z \in V} S(z) \cap F$, we consider an arbitrary cover  $\cu = \{ B(x_i,r_i)\}_{i \in\ci_{k_1}}$ of this set
by balls of radii between $\dz/2$ and $\dz$ where $\dz=2^{-k_1}$ ($k_1 \in \nn$) is sufficiently small.
We will give a lower bound of $\#\ci_{k_1}$ independent of the choice of the cover $\cu$.  Recall that the desired lower bound is $\frac{t}3+s$ in  Theorem \ref{main2} and $(2t+1)s-t$ in Theorem \ref{main3},
so we need to show that
\begin{equation}\label{ap1}
  \# \ci_{k_1} \gtrsim  \lf (\frac{1}\dz \lr)^{\frac{t}3+s} \mbox{ in Theorem \ref{main2}}
\end{equation}
and
\begin{equation}\label{ap2}
   \# \ci_{k_1} \gtrsim  \lf (\frac{1}\dz \lr)^{(2t+1)s-t} \mbox{ if $\frac12<s\le 1$ in Theorem \ref{main3}}.
\end{equation}
Indeed, this will imply
\begin{equation*}
  \sum_{i \in \ci_{k_1}} r_i^{\frac{t}3+s} \gtrsim \lf(\frac{1}\dz \lr)^{\frac{t}3+s} \dz^{\frac{t}3+s} \gtrsim 1, \mbox{ in Theorem \ref{main2}}
\end{equation*}
 and
 \begin{equation*}
   \sum_{i \in \ci_{k_1}} r_i^{(2t+1)s-t} \gtrsim   \lf (\frac{1}\dz \lr)^{(2t+1)s-t}  \dz^{(2t+1)s-t} \gtrsim 1 \mbox{ if $\frac12<s\le 1$ in Theorem \ref{main3},}
 \end{equation*}
 which further imply that the $\frac{t}3+s$ (resp. $(2t+1)s-t$) dimensional Hausdorff measure of $F$ is positive and therefore the Hausdorff dimension of $F$ is at least $\frac{t}3+s$ (resp. $(2t+1)s-t$).

To show \eqref{ap1}, we adapt the approach for showing the lower bound for the Hausdorff dimension of linear $(s,1)$-Furstenberg sets used by Wolff in \cite{w99} together with some geometric observations from planar geometry.
The heuristic idea is that, since three points determine a unique circle in the plane provided they are not collinear, we can show that three well-separated $\dz$-balls $B_{i}, \ B_{j}, \ B_{k}$ determine a ``unique'' circle $S(z)$ (not necessarily unique in reality, see the statement before \eqref{inters2}), $z \in V$, with the help of Lemma \ref{geom3}, which intuitively means that there exists a unique circle $S(z)$ with $z \in V$ such that $S(z) \cap B_{l} \ne \emptyset$ for $l=i,j,k$. This further
 enables us to identify the circle $S(z)$ with the triple $(i, j, k)$.
 Indeed, the above manipulations are motivated by Wolff \cite{w99} to show the lower bound $1/2 + s$ in \eqref{wolfflin} for the Hausdorff dimension of linear $(s,1)$-Furstenberg sets where $1/2$ appears from the fact that two points determine a unique line in the plane. For circular $(s,1)$-Furstenberg sets, we can only get the lower bound $1/3 + s$ since we need three points to determine a circle.
 On the other hand, since $S(z) \cap F$ has Hausdorff dimension no less than $s$, we need, roughly speaking, at least $\sim \dz^{-s}$ $\dz$-balls in $\cu$ to cover $S(z) \cap F$. Hence we can identify each $S(z) \cap F$ by the triples $(i, j, k) \in  \ci_{k_1} \times \ci_{k_1} \times \ci_{k_1}$ (or equivalently, $(B_i,B_j,B_k) \in \cu \times \cu \times \cu$) where $S(z) \cap B_{l} \ne \emptyset$ for $l=i,j,k$. Then each $S(z) \cap F$ gives rise to $\dz^{-s} (\dz^{-s}-1)(\dz^{-s}-2) \sim \dz^{-3s}$ many distinct triples $(i, j, k) \in \ci_{k_1} \times \ci_{k_1} \times \ci_{k_1}$ representing three distinct $\dz$-balls in $\cu$ and therefore we obtain a total number $\# V \times \dz^{-3s} = \dz^{-3s-t}$ many distinct triples.
  Finally, since all these triples are contained in $\ci_{k_1} \times \ci_{k_1} \times \ci_{k_1}$, we deduce that $(\#\ci_{k_1})^3 \gtrsim \dz^{-3s-t}$, which gives \eqref{ap1}. This is the rough idea behind the proof of the Minkowski dimesion version of Theorem \ref{main2}.

On the other hand, inequality \eqref{ap2} is obtained by applying the result from K\"{a}enm\"{a}ki-Orponen-Venieri in \cite{kov17} utilised to find  the Hausdorff dimension of $t$-dimensional analytic sets of circles. Heuristically, as discussed above, since one needs at least $\sim \dz^{-s}$ $\dz$-balls in $\cu$ to cover $S(z) \cap F$ for each $z \in V$, if each $\dz$-ball in $\cu$ only intersects one $S(z) \cap F$ for some $z \in V$, then $\cu$ consists of at least $\dz^{-s}\#V \sim \dz^{-s-t}$ many $\dz$-balls. However, this may not be the case. In general, if each $\dz$-ball in $\cu$ intersects no more than $\dz^{-\xi}$ ($0 < \xi \le t$) many sets from the family $\{S(z) \cap F\}_{z \in V}$, then we can deduce that $\cu$ consists of at least $\frac{\dz^{-s}\#V}{\dz^{-\xi}} \sim \frac{\dz^{-s-t}}{\dz^{-\xi}}$ many $\dz$-balls.
Actually, by applying \cite[Lemma 5.1]{kov17}, we can show that for more than half of points $z$ in $V$, there exists $S'(z) \subset S(z) \cap F$ with $\dim_\ch S'(z) = \dim_\ch [S(z) \cap F] \ge s$ such that
each $\dz$-ball in $\cu$ intersects no more than $\dz^{t(2s-2)}$  many sets from the family $\{S'(z)\}_{z \in V}$ where $t(2s-2)$ arises from the choice of the parameter $\lz$ when applying Lemma 5.1 in \cite{kov17} to guarantee \eqref{sim2} holds. We refer readers to the discussion around \eqref{con2} in Section 4 for details. This fact will imply that there exist at least $\frac{\dz^{-s}\#V}{\dz^{t(2s-2)}} \sim \dz^{-[(2t+1)s-t]}$ many $\dz$-balls in $\cu$, which is equivalent to say $\#\ci_{k_1} \gtrsim \dz^{-[(2t+1)s-t]}$. Hence \eqref{ap2} holds and this concludes a heuristic discussion regarding Theorem \ref{main3}.

\medskip
Finally, we remark that we do not know if the bound $\max\{\frac{t}3+s,(2s-1)t+s\}$ in Theorem \ref{main1} is sharp and we here make a conjecture that the sharp lower bound for Hausdorff dimension of circular $(s,1)$-Furstenberg sets is $\frac12+ \frac32s$ for $0<s\le 1$. Indeed, in the following example, based on the example in \cite{w99}, we construct a circular $(s,1)$-Furstenberg set whose Hausdorff dimension does not exceed $\frac12+ \frac32s$  for all $0<s\le 1$.

\begin{example} \rm
     Due to the construction in \cite[Section 1]{w99} by Wolff, for all $0<s\le 1$, there exists a linear $(s,1)$-Furstenberg set $F \subset B(0,4) \setminus B(0,1)$ whose Hausdorff dimension does not exceed $\frac12+ \frac32s$. Now considering $\rr^2$ as the complex plane $\mathbb C$, using the map $\omega : \mathbb C \to \mathbb C, \ z \mapsto \frac{1}{z}$, all lines in $\mathbb C$ are mapped to circles through $(0,0)$. Also noticing that $\oz |_{B(0,4)\setminus B(0,1)  }$ is a biLipschitz homeomorphism, we deduce that $F':=\oz(F)$ is a circular $(s,1)$-Furstenberg set with same dimension as $F$. That is, dim$_\ch (F') \le \frac12+ \frac32s$.
\end{example}

The paper is organised as follows. In Section 2, we clarify our notations and symbols, as well as introduce definitions and results employed in the proof. Section 3 and 4 are devoted to showing the proof of Theorem \ref{main2} and \ref{main3} respectively. In the last section, Section 5, we complete the proof of some auxiliary lemmas needed in the proof of Theorem \ref{main2} using planar geometry.

\bigskip
\begin{center}

\textsc{Acknowledgement
}
\end{center}
J. L. would like to thank K. F\"{a}ssler and T. Orponen for many motivating discussions and their constant support.
J. L. would also like to convey his gratitude to the anonymous referee for pointing out a mistake in the proof of Theorem \ref{main2} and for providing many valuable suggestions which significantly improved the final presentation of the paper.

\section{Preliminaries}
In this paper, we denote by $S^{\dz}(x,r)$ the $\dz$-neighbourhood of $S(x,r)$, i.e.
$$  S^{\dz}(x,r):=   B(x,r+\dz) \setminus B(x,r-\dz)  .$$
We also use the notation $z=(x,r) \in \rr^3$.
Moreover, we use the notation $f \lesssim g$ (resp. $f \lesssim_h g$)
for $f \le k g$ (resp. $f \le k(h) g$) where $k$ is a constant that depends only on the ambient space (resp. the parameter $h$), and may change from line to line. Likewise, $f \gtrsim g$ and $f \sim g$ are understood correspondingly.

The notation $\ch^s$ stands for the $s$-dimensional Hausdorff measure, and $\ch_\fz^s$ stands for
$s$-dimensional Hausdorff content. The notation $|\cdot|$ and $\|\cdot\|$ will denote
the Lebesgue measure and  the Euclidean distance respectively in $\rr^2$ or $\rr^3$. We also use $\text{dist}(A, B)$ to denote Euclidean distance between $A$ and $B$ where $A$ and $B$ can be either points or sets. $\# A$ will denote the cardinality of a set $A$.

We have the following observation which makes it possible to restrict ourselves to circular Furstenberg sets with bounded parameter set.
\begin{rem} \label{rem1}
  \rm (i) Since we are concerned with the Hausdorff dimension of the circular Furstenberg set $F$, we claim that it is enough to consider the case that $F$ has parameter set $K \subset \textbf{B}_0$ where
  \begin{equation}\label{b0}
    \textbf{B}_0= \{(x,r) \in \rr^3 \ | \ x\in B(0,\frac14) \text{ and } \frac12 \le r \le 2\}.
  \end{equation}
  To see this, consider the following covering of the parameter space $\rr^3_+$. For $k,l,m \in \zz$, let
  $$  D_{k,l,m}:= \{(x,r) \in \rr^3 \ | \ x\in B((2^{2m-2}k,2^{2m-2}l),2^{2m-2}) \text{ and }  2^{2m-1} \le r \le 2^{2m+1} \}.$$
  Then
  $$  \rr^3_+ = \bigcup_{k,l,m} D_{k,l,m}$$
  and
  $$\textbf{B}_0 = D_{0,0,0}.$$

  Hence for each $\ez>0$ sufficiently small, there exists $k_\ez, l_\ez, m_\ez$ such that
  \begin{equation}\label{rem02}
    \dim_\ch(K)- \dim_\ch(K \cap D_{k_\ez, l_\ez, m_\ez}) < \ez.
  \end{equation}
   Let $F_{\ez}$ be the circular Furstenberg set with parameter set $ K \cap D_{k_\ez, l_\ez, m_\ez}$.
  Denote by $\mathcal{S}_{y}:\rr^2 \to \rr^2$, $\mathcal{S}_{y}(x):=x-y$ for any $y \in \rr^2$ and by $\mathcal{D}_\lz:\rr^2 \to \rr^2$, $\mathcal{D}_\lz(x):=\lz x$ for any $\lz>0$.

  Then, letting $y=(2^{2m_\ez-2}k_\ez,2^{2m_\ez-2}l_\ez)$ and $\lz= 2^{-2m_\ez}$, we observe that
  $$\wz F_\ez := \mathcal{D}_{2^{-2m_\ez}} \circ \mathcal{S}_{(2^{2m_\ez-2}k_\ez,2^{2m_\ez-2}l_\ez)}(F_\ez)$$
  is a circular Furstenberg set the parameter set $\wz K_\ez$  contained in $\textbf{B}_{0}$ and satisfying
  \begin{equation}\label{rem01}
    \dim_\ch(\wz K_\ez)=\dim_\ch(K \cap D_{k_\ez, l_\ez, m_\ez}).
    \end{equation}
    If $F$ is a circular $(s,t)$-Furstenberg set, then by \eqref{rem02} and \eqref{rem01}, for $0<\ez<t$, we know
    $\wz F_\ez$ is a circular $(s,t-\ez)$-Furstenberg set and
    \begin{equation}\label{rem03}
      \dim_\ch F \ge \dim_{\ch} \wz F_\ez   \text{ for every $0<\ez<t$}.
    \end{equation}
  Now, assume Theorem \ref{main1} holds for circular Furstenberg sets with parameter set contained in $\textbf{B}_{0}$,
  then
  \begin{equation}\label{rem04}
    \dim_{\ch} \wz F_\ez \ge \max\{\frac{t-\ez}3+s,(2s-1)(t-\ez)+s\} \text{ for every $0<\ez<t$}.
  \end{equation}
  Combining \eqref{rem03} and \eqref{rem04},
  we deduce that
  $$ \dim_\ch F \ge \lim_{\ez \to 0} \max\{\frac{t-\ez}3+s,(2s-1)(t-\ez)+s\} = \max\{\frac{t}3+s,(2s-1)t+s\}.$$
  Hence to show Theorem \ref{main1}, we only need to consider the case that $F$ has parameter set $K \subset \textbf{B}_0$.

  \medskip
  (ii) Note that $|S^{\dz}(x,r)| \le c_0 \dz$ for all $(x,r) \in \textbf{B}_0$ where $c_0$ is an absolute constant.
\end{rem}

\medskip

We introduce the following:
\begin{defn}[$(\dz,q)$-sets] \label{def2}
Let $\dz\in (0,1),q>0$, and let $P \subset \rr^n$ be a finite $\dz$-separated set. We say that $P$ is a $(\dz,q)$-set, if it satisfies the estimate
\begin{equation}\label{def1}
  \#\{P \cap B(x,r)\} \lesssim \lf (\frac{r}\dz \lr)^q, \quad x\in \rr^n, \ r>\dz.
\end{equation}
\end{defn}

We recall from \cite[Lemma 3.13]{fo14} the following

\begin{lem} \label{frost}
    Let $\dz,q>0$, and let $Q\subset  \rr^n$ be any set with $\ch^q_\fz(Q)=:\bz>0$. Then there exists a $(\dz,q)$-set $P\subset Q$ with cardinality $\# P \gtrsim \bz \cdot \dz^{-q}$.
\end{lem}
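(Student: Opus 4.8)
The plan is to prove Lemma \ref{frost} by a standard pigeonhole/dyadic-scale argument applied to a maximal $\dz$-separated subset of $Q$, combined with a counting argument that converts the Hausdorff content lower bound into a lower bound on the number of points. First I would fix a maximal $\dz$-separated set $P \subset Q$; by maximality the balls $\{B(p,\dz)\}_{p\in P}$ cover $Q$, so $\ch^q_\fz(Q) \le \sum_{p\in P} \ch^q_\fz(B(p,\dz)) \lesssim (\#P)\dz^q$, which immediately gives the cardinality bound $\#P \gtrsim \bz\,\dz^{-q}$. The issue is that such a $P$ need not be a $(\dz,q)$-set: some ball $B(x,r)$ with $r>\dz$ could contain far more than $(r/\dz)^q$ points of $P$, so one has to \emph{refine} $P$ to fix the non-concentration condition while retaining most of its cardinality.

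For the refinement step I would run the usual ``remove the bad balls'' iteration over dyadic radii. Call a dyadic radius $r = 2^j\dz$ (for $j \ge 1$) and a ball $B(x,r)$ \emph{heavy} if it contains more than $C (r/\dz)^q$ points of the current set, for a constant $C$ to be chosen. If some heavy ball exists at the largest relevant scale, one can cover $Q$ (hence $P$) by $\lesssim \bz^{-1}(R/\dz)^q$ balls of that top radius $R \sim \diam Q$, and a Vitali-type/covering count shows that the total mass of points lying in heavy balls at scale $r$ is controlled by a small fraction of $\#P$ provided $C$ is large enough relative to the covering constants; summing the geometric series over all dyadic scales $r \in (\dz, R]$, the total number of points removed is at most $\tfrac12 \#P$. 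One then sets $P'$ to be $P$ minus all points ever removed; $P'$ is still $\dz$-separated, still has $\#P' \gtrsim \bz\,\dz^{-q}$, and now satisfies \eqref{def1} with an absolute implicit constant because every surviving ball was declared light at every scale. (Alternatively one can invoke directly the standard statement from \cite{fo14} that this excerpt is quoting, but I would reproduce the self-contained argument.)

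The main obstacle I anticipate is making the geometric-series bookkeeping in the refinement honest: one must ensure the constant $C$ in the definition of ``heavy'' can be chosen uniformly (independent of $\dz$, $Q$, $\bz$), that the count of heavy balls at each scale is controlled using only the ambient dimension $n$ and the content of $Q$, and that the removed points at different scales do not lead to circular dependence (this is handled by processing scales from coarse to fine, or equivalently by a single global selection of a maximal separated set and then a one-pass cleanup). A secondary technical point is the elementary fact $\ch^q_\fz(B(p,\dz)) \lesssim_n \dz^q$ and, in the displayed covering inequality, that $\ch^q_\fz(Q) \lesssim_n (\#P)\dz^q$ — both follow from subadditivity of Hausdorff content and the definition, but they are exactly where the dependence on $n$ (absorbed into $\lesssim$) enters. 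Once these constants are pinned down, the conclusion $\#P' \gtrsim \bz\,\dz^{-q}$ with $P'$ a genuine $(\dz,q)$-set is immediate.
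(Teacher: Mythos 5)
The paper does not actually prove this lemma; it quotes it verbatim from \cite[Lemma 3.13]{fo14}, so there is no internal proof to compare against. Judged on its own terms, your proposal contains a genuine gap in the refinement step. The first half is fine: a maximal $\dz$-separated set $P_0\subset Q$ gives $\bz=\ch^q_\fz(Q)\le\sum_{p\in P_0}\ch^q_\fz(B(p,\dz))\lesssim \#P_0\,\dz^q$. But the cleanup ``remove every point that ever lies in a heavy ball, and check that the total removed is at most $\tfrac12\#P_0$'' does not work, and no choice of the constant $C$ saves it. Heaviness at scale $r$ is measured against the threshold $C(r/\dz)^q$, while a $\dz$-separated set inside a ball of radius $r$ can contain up to $(r/\dz)^n$ points; for $q<n$ this means a single ball at a coarse scale can be heavy and contain \emph{all} of $P_0$ (take $Q$ a cube of side $10\dz$: the top-scale ball holds $\sim 10^n$ points versus a threshold $\sim C\cdot 10^q$). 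Your deletion rule would then empty the set entirely, and the claimed geometric-series bound on the removed mass has no justification --- there is no mechanism forcing the heavy balls at a given scale to carry only a small fraction of $\#P_0$.

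The standard argument (and the one behind \cite[Lemma 3.13]{fo14}) differs in two essential ways. First, heavy dyadic cubes are not emptied but \emph{trimmed}: working from the finest scale $\dz=2^{-j_0}$ up to the coarsest, every dyadic cube of generation $j$ containing more than $2^{(j_0-j)q}$ surviving points is reduced to exactly that many. This makes the final set $P$ a $(\dz,q)$-set with a constant depending only on $n$. Second, the cardinality lower bound for $P$ is \emph{not} obtained as ``initial count minus removed points''; instead one covers $Q$ by the maximal dyadic cubes that were saturated at some stage (together with the untouched generation-$j_0$ cubes). Each such cube $R$ of generation $j$ satisfies $(\diam R)^q\sim 2^{-jq}\sim \dz^q\,\#(P\cap R)$, and since these maximal cubes are pairwise disjoint, subadditivity of the Hausdorff content gives
\begin{equation*}
\bz=\ch^q_\fz(Q)\le\sum_{R}(\diam R)^q\lesssim \dz^q\sum_R\#(P\cap R)\le \dz^q\,\#P,
\end{equation*}
which is the desired bound $\#P\gtrsim\bz\,\dz^{-q}$. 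Without replacing your deletion rule by this trim-and-cover bookkeeping, the proposal does not close.
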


\begin{rem}\label{remsim}
  \rm  If $Q \subset \textbf{B}_0$ and $\ch^q_\fz(Q)=\bz$, by Lemma \ref{frost}, we know that for
any $\dz>0$, there exists a $(\dz,q)$-set $P \subset Q$ with
cardinality $\# P \gtrsim \bz \dz^{-q}$. Furthermore,
letting $r=\mbox{diam} \textbf{B}_0$ in \eqref{def1}, we know $\# P \lesssim
\dz^{-q}$, if $\dz< \text{diam}\textbf{B}_0$.
We conclude that
$$ \bz \dz^{-q} \lesssim \# P  \lesssim
 \dz^{-q}. $$
\end{rem}

To show Theorem \ref{main2}, we need to establish the following result from planar geometry. Since the proof relies on two more auxiliary lemmas, we postpone it to the last section.

\begin{lem} \label{geom3}
  Let $A,B,C \in \rr^2$ such that $\min\{\|A-B\|, \|A-C\|, \|B-C\|,2\} \ge 2c$. For $a>0$ such that $a < \frac1{20}c^2$, define
  \begin{equation}\label{geo6}
W:= \left \{ \begin{array}{ll}
  \quad & \quad b-a \le \|x-A\| \le b+a,  \\
 (x,b) \in  \rr^2 \times [\frac12,2]:  &  \quad b-a \le \|x-B\| \le b+a, \\
   \quad  & \quad b-a \le \|x-C\| \le b+a 
\end{array}
\right \}.
\end{equation}
Then
\begin{equation}\label{geo23}
  \text{\rm diam} \, W \lesssim \frac{a}{c^2}.
\end{equation}
\end{lem}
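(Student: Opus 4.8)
The statement says: three points $A,B,C$ in the plane, pairwise separated by at least $2c$ (and all within a bounded region since the radii $b$ lie in $[1/2,2]$), determine the center-radius triple $(x,b)$ up to an error $\lesssim a/c^2$, where $a$ is the tolerance with which $(x,b)$ is required to ``almost pass through'' each of $A,B,C$. The underlying fact is that three non-collinear points determine a unique circle, and the map from a triple of points on a circle to its center-and-radius is smooth with derivative bounds controlled by the mutual separations; the separation hypothesis $\min\{\cdots\}\ge 2c$ together with the radius bound $b\le 2$ forces the three points to be genuinely non-collinear (in a quantitative sense), so this map is Lipschitz with constant $\lesssim 1/c^2$ on the relevant set.

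The plan is to argue directly with the defining equations rather than invoking implicit-function smoothness. Suppose $(x,b)$ and $(x',b')$ both lie in $W$. From $b-a\le \|x-A\|\le b+a$ and the analogous inequalities for $x'$, I would first derive
$$\big|\, \|x-A\|^2 - \|x'-A\|^2 \,\big| \lesssim a,$$
using that $b,b'\in[1/2,2]$ so all the quantities $\|x-A\|+\|x'-A\|$, $b+b'$, etc., are $\sim 1$; and similarly with $A$ replaced by $B$ and by $C$. Expanding $\|x-A\|^2 = \|x\|^2 - 2\langle x,A\rangle + \|A\|^2$ and subtracting the $A$-equation from the $B$-equation (so the $\|x\|^2$ terms cancel) gives
$$\big|\, 2\langle x'-x,\, B-A\rangle - (\text{something of size} \lesssim a)\,\big| \lesssim a,$$
i.e. $|\langle x'-x, B-A\rangle| \lesssim a$, and likewise $|\langle x'-x, C-A\rangle|\lesssim a$. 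So the vector $v:=x'-x$ has small inner product with both $B-A$ and $C-A$. The point of the non-collinearity is exactly that $\{B-A, C-A\}$ spans $\rr^2$ quantitatively: the key geometric lemma (presumably one of the two auxiliary lemmas the author defers) should say that with $\|A-B\|,\|A-C\|,\|B-C\|\ge 2c$ and all points within distance $\lesssim 1$ of each other (forced by radii $\le 2$), the area of triangle $ABC$ is $\gtrsim c^2$ — equivalently, writing $B-A$ and $C-A$ in an orthonormal basis, the $2\times 2$ matrix with these rows has determinant of absolute value $\gtrsim c^2$ while its entries are $\lesssim 1$, hence its inverse has operator norm $\lesssim 1/c^2$. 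Applying that inverse to the vector $(\langle v,B-A\rangle, \langle v,C-A\rangle)$, whose entries are $\lesssim a$, yields $\|v\| = \|x-x'\| \lesssim a/c^2$. Finally, $|b-b'|\le \big|\,\|x-A\| - \|x'-A\|\,\big| + 2a \le \|x-x'\| + 2a \lesssim a/c^2$ (using $a\le c^2/20 \le c^2$, so the bare $a$ term is dominated). Combining, $\|(x,b)-(x',b')\| \lesssim a/c^2$, which is \eqref{geo23}.

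The main obstacle — and the reason the lemma needs its companion results — is the quantitative non-collinearity: making precise that the pairwise lower bounds $2c$, in the presence of an upper bound on all the distances (which comes from $b\in[1/2,2]$, so each point is within $b+a\le 3$ of the center $x$, hence the three points lie in a ball of radius $\lesssim 1$), force the triangle $ABC$ to have area bounded below by a constant times $c^2$. One has to rule out the degenerate configuration where the three points are nearly collinear despite being pairwise far apart, which is possible in general but is excluded here precisely because they also cannot be too far apart. That is where the constant $\tfrac1{20}$ and the explicit role of $c^2$ enter, and where I would expect the deferred auxiliary lemmas (Lemma referenced before \eqref{inters2} and the geometry of intersecting annuli) to do the work; once that area lower bound is in hand, the inner-product/linear-algebra argument above is routine and the factor $1/c^2$ emerges exactly as the norm of the inverse of a matrix with determinant $\gtrsim c^2$ and entries $O(1)$.
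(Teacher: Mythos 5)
Your overall strategy — compare two points $(x,b),(x',b')\in W$ by subtracting the squared-distance equations so that $|\langle x-x',B-A\rangle|\lesssim a$ and $|\langle x-x',C-A\rangle|\lesssim a$, then invert the $2\times 2$ matrix with rows $B-A$, $C-A$ — is a legitimately different route from the paper, which argues synthetically: Lemma \ref{geom1} traps $S^a(A,b)\cap S^a(B,b)$ inside a rectangle of width $\sim a/c$ around the perpendicular bisector of $AB$ (via the hyperbola $|\,\|x-A\|-\|x-B\|\,|\le 2a$), and Lemma \ref{geom2} intersects two such rectangles into a rhombus of diameter $\lesssim a/c^2$ about the circumcenter, disposing of (near-)degenerate triangles by showing $W(b)=\emptyset$ outright. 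However, your argument has a genuine gap exactly at the step you flag as the main obstacle: the quantitative non-collinearity lemma you propose is false as stated. Pairwise separation $\ge 2c$ together with an $O(1)$ upper bound on the mutual distances does \emph{not} force $\mathrm{Area}(\triangle ABC)\gtrsim c^2$ (nor $\gtrsim$ anything positive): take $A=(0,0)$, $B=(2c,0)$, $C=(4c,\epsilon)$ with $\epsilon$ arbitrarily small. What actually excludes near-collinearity is not that $A,B,C$ "cannot be too far apart" but that $W\ne\emptyset$, i.e.\ that $A,B,C$ all lie in one annulus $S^a(x,b)$ with $b\le 2$: a line meets such an annulus in at most two segments, each of length at most $2\sqrt{a(b+a)}<c$ since $a<c^2/20$, so no segment can contain two of the three $2c$-separated points, and three collinear (or nearly collinear, whence huge circumradius) points are impossible. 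This is precisely what the paper's hyperbola/rectangle computation encodes, and without some version of it your matrix can be arbitrarily close to singular.

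Two further points, one substantive and one cosmetic. Substantive: even after non-degeneracy is secured, "determinant $\gtrsim c^2$ with $O(1)$ entries" is not the right accounting — the separation hypotheses only give $\mathrm{Area}(\triangle ABC)=\frac{\|A-B\|\|B-C\|\|C-A\|}{4R}\gtrsim c^3/R$, and configurations with area $\sim c^3$ genuinely occur; you recover $\|M^{-1}\|\lesssim R/c^2$ because the adjugate entries are then also small ($\lesssim\max(\|B-A\|,\|C-A\|)$), so you still need the bound $R\lesssim 1$ on the circumradius, which again comes only from the annulus constraint (it is essentially the estimate $|b-h|\lesssim a/c^2$ of Lemma \ref{geom2}). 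Cosmetic: your intermediate display $|\,\|x-A\|^2-\|x'-A\|^2\,|\lesssim a$ is wrong when $b\ne b'$ (it is off by $|b^2-b'^2|$, which is not yet known to be small); this is harmless because that term is common to the $A$-, $B$- and $C$-equations and cancels in the differences, but the subtraction should be organized as ($A$-equation minus $B$-equation) for each center separately. The repair is to fix one point $(x_0,b_0)\in W$ (if $W$ has at most one point there is nothing to prove), use the annulus-segment argument to obtain quantitative non-collinearity and $R\lesssim 1$, and only then run your linear-algebra step; with that insertion your proof goes through and is a nice alternative to the paper's.
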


It is worth mentioning that Lemma \ref{geom3} shares a very similar conclusion with the one in \cite[Lemma 3.2 (Mastrand's 3-circle lemma)]{w99}. Indeed, if we let $\ez=\dz=a$, $r=b$, $\lz=c$, $t=1/2-a$ and $r_1=r_2=r_3=a$ therein, then the set $W$ in Lemma \ref{geom3} will be contained in $\Omega_{\ez t \lz}$ defined in \cite[Lemma 3.2]{w99}. And the conclusion of \cite[Lemma 3.2]{w99} says that $\Omega_{\ez t \lz}$ is contained in the union of two ellipsoids in $\rr^3$ with diam$\, \Omega_{\ez t \lz} \lesssim \frac{a}{c^2}$. Since we only consider the case $r_1=r_2=r_3=a$ (that is, $C_\dz(x_i,r_i)$ become balls $B(x_i,2a)$ for $i=1,2,3$ in \cite[Lemma 3.2]{w99}), we can deduce that $W$ lies in one cuboid in $\rr^3$ based on
an approach which differs completely from the one of \cite[Lemma 3.2]{w99}.

Now, we start the preparation for the proof of Theorem \ref{main3}.
Let $P \subset \rr^3$ be a $(\dz,q)$-set. For any $p \in P$, let $\bdz_p$ be the Dirac measure centered at $p$.
Then
\begin{equation}\label{defmu}
  \mu_P:= \frac{1}{\# P} \sum_{p\in P} \bdz_{p}
\end{equation}
is a probability measure satisfying the Frostman condition $\mu_P(B(z,r)) \lesssim  r^q$ for all $z \in \rr^3$ and $r>\dz$. Indeed, for any ball $B(z,r)$ with $r>\dz$ we have
$$ \mu_P(B(z,r))= \frac{1}{\# P} \lf(\sum_{p\in P} \bdz_{p} \lr)( B(z,r)) = \frac{1}{\# P} \sum_{p\in P} \bdz_{p}(B(z,r))=\frac{1}{\# P} \#(P \cap B(z,r)) \lesssim  r^q.$$

Below in Section 3 and 4, thanks to Remark \ref{rem1}(i), we will assume the circular $(s,t)$-Furstenberg set $F$ has parameter set $K \subset \textbf{B}_0$.

\section{Proof of Theorem 1.2}

\begin{proof}[Proof of Theorem \ref{main2}.]
   Let $F$ be a circular $(s,t)$-Furstenberg set with parameter set $K \subset \textbf{B}_0$. It suffices to show, for any $\ez>0$, $0<s'<s$ and $0<t'<t$,
$$ \dim_\ch(F) \ge \frac{t'}3+s'- \ez.$$
Hence in the following, we fix $s',t'$ and $0<\ez<\frac{t'}3+s'$.

We notice that there exists $\az>0$ and $K_1 \subset K$ such that $\ch^{t'}_\fz(K_1)> \az$,
where
\begin{equation}\label{lbdd11}
  K_1 := \{ z\in K \ | \ \ch^{s'}_\fz(F \cap S(z))> \az\}.
\end{equation}
Indeed, by the subadditivity of Hausdorff content, and the fact
$$ K= \bigcup_n \{ z\in K \ | \ \ch^{s'}_\fz(F \cap S(z))> \frac1{n}\},$$
we deduce the existence of $\az$ such that $\ch^{t'}_\fz(K_1)> \az$ for $K_1$ defined as in \eqref{lbdd11}.

Next, since $\ez>0$, we can find $\dz_0=\dz_0(\ez,s')>0$ sufficiently small such that for any $0<\dz<\dz_0$, we have
\begin{equation}\label{para11}
  \dz^{-\ez}(\log\frac1{\dz})^{-(\frac83+\frac{12}{s'})}
    > 1.
\end{equation}
and
\begin{equation}\label{para12}
  \sqrt{640 \dz} < \tau=\tau(\dz) : =\pi^{-1} (\frac{1}{16})^{1/s'}(\frac{1}{\log \frac{1}{\dz}})^{2/s'} <1.
\end{equation}
Then we choose $k_0$ to be an integer larger than $\log(\frac1{\dz_0})$ also satisfying
\begin{equation}\label{para31}
  \az>\sum_{k=k_0}^{\fz} \frac{1}{k^2}.
\end{equation}


Now, we outline the main steps of the proof. We start with an arbitrary cover  $\cu = \{ B(x_i,r_i)\}_{i \in\ci}$ of $F$
by balls of radius less than $2^{-k_0}$. In the sequel, we will derive a lower bound
$$ \sum_{i \in \ci} r_i^\sz \gtrsim_{\ez,t',s'} 1$$
with $\sz= t'/3+s'- \ez$ independent of the choice of the particular cover. This will imply
$$ \ch^\sz(F) > 0.$$

To this end, we divide the proof into 4 steps.
Let
$$ \ci_k:= \{i \in \ci \ | \  2^{-(k+1)}<r_i \le 2^{-k} \}, \ F_k:= \{\bigcup B(x_i,r_i) \ | \ i \in \ci_k \}.$$

First, in  \textit{Step 1}, we will deduce that there exists $k_1 \ge k_0$ and a $(\dz,t')$-set $V \subset K$ with $\dz=2^{-k_1}$ such that for every circle $z=(x,r) \in V$, we have
\begin{equation}\label{lbdd31}
  \ch^{s'}_\fz(S(z)\cap F_{k_1})> k_1^{-2}.
\end{equation}

Then, in  \textit{Step 2}, we modify Wolff's approach  for linear $(s,1)$-Furstenberg sets to fit our circular case. For each circle $S(z)$ with $z \in V$, we will extract from $S(z)$ three $\tau $-separated arcs $h_z^+,h_z^-,h_z^\times$ such that
\begin{equation}\label{tri10}
  \ch^{s'}_\fz(h_z^+ \cap F_{k_1}) \gtrsim k_1^{-2}, \ \ch^{s'}_\fz(h_z^-\cap F_{k_1})\gtrsim k_1^{-2}, \ \ch^{s'}_\fz(h_z^\times \cap F_{k_1}) \gtrsim  k_1^{-2}.
\end{equation}

These arcs enable us to define an index set $\ct \subset \ci_{k_1} \times \ci_{k_1} \times  \ci_{k_1} \times V$ whose cardinality will be estimated in the following steps and will imply the lower bound for $\#\ci_{k_1}$.

Next, in \textit{Step 3},
we will deduce that the cardinality of $\ct$ is upper bounded by the cardinality of $ \ci_{k_1} $ with the help of Lemma \ref{geom3}. Indeed, we will show
$$\# \ct \lesssim  (\# \ci_{k_1})^3 \tau^{-6}.$$

Finally, in \textit{Step 4}, we will estimate the lower bound of $\# \ct$ which also serves as the one of $\# \ci_{k_1}$, hence $\# \ci$ with the aid of \eqref{tri10}.
This will enable us to conclude the proof.

\bigskip
\textit{Step 1.}
Let $\az$ be as in \eqref{para31}.
Hence by pigeonhole principle we deduce that for each $S(z) \in K_1$, there exists $k(z) \ge k_0$ such that $\ch^{s'}_\fz( S(z)\cap F\cap F_{k(z)})> k(z)^{-2}$.

Moreover, by applying pigeonhole principle again we obtain that there exists $k_1 \ge k_0$ such that
\begin{equation}\label{lbdd21}
  \ch^{t'}_\fz(K_2)> k_1^{-2}
\end{equation}
 where $ K_2 := \{z \in K_1 \ | \ k(z) = k_1\}.$

We remark that for every circle $z \in K_2$, we have
\begin{equation}\label{lbdd41}
  \infty > \ch^{s'}_\fz(S(z)\cap F_{k_1})\ge \ch^{s'}_\fz(S(z)\cap F \cap F_{k_1})> k_1^{-2}.
\end{equation}

By letting $\dz=2^{-k_1}$, $q=t'$ and $Q=K_2$ in Lemma \ref{frost}, we know that there exists a $(\dz,t')$-set $V \subset K_2$ with cardinality
\begin{equation}\label{bd11}
  \# V \gtrsim  \ch^{t'}_\fz(K_2) \cdot \dz^{-t'}.
\end{equation}
Hence for every $z \in V$, \eqref{lbdd41} implies \eqref{lbdd31}, which concludes \textit{Step 1.}


\bigskip
\textit{Step 2.}
We start the procedure of extracting three disjoint arcs for any $S(z), \ z=(x,r) \in V$, which is illustrated in Figure \ref{f0301}, \ref{f0302} and \ref{f0303}. Let
$$  \eta:=\eta(z)= \ch^{s'}_\fz(S(z)\cap F_{k_1}). $$
Also let $\gz=(\frac{\eta}{16})^{1/s'}$.
Divide $S(z)$ into $N$ arcs $I_1, \cdots, I_N$ such that

\begin{itemize}
  \item the length of $I_1 , \cdots, I_{N-1}$ is $\gz$,
  \item the length of $I_N$ is at most $\gz$,
  \item and $N\gz \ge 2\pi r$.
\end{itemize}

Since $\gz=(\frac{\eta}{16})^{1/s'} \le \frac{1}{16}$ and $z=(x,r)\in \textbf{B}_0$ implies $r>\frac12$, we know
$$  N \ge \frac{2\pi r}{\gz} \ge \frac{\pi }{\frac{1}{16}} \ge 16.  $$
Note that if $I$ is an arc in $S(z)$, then
\begin{equation}\label{cont1}
  \ch^{s'}_\fz(I) \le (\text{diam} I)^{s'} \le (\ch^1(I))^{s'}.
\end{equation}
This implies for all $l=1,\cdots, N$,
\begin{equation}\label{cont2}
  \ch^{s'}_\fz(I_l \cap F_{k_1}) \le \ch^{s'}_\fz(I_l) \le \gz^{s'} = \frac{\eta}{16} .
\end{equation}
See Figure \ref{f0301} for $N$ arcs.
\vspace*{7pt}
\begin{figure}[h]
\hspace{-0.7cm}
\centering
\includegraphics[width=10cm]{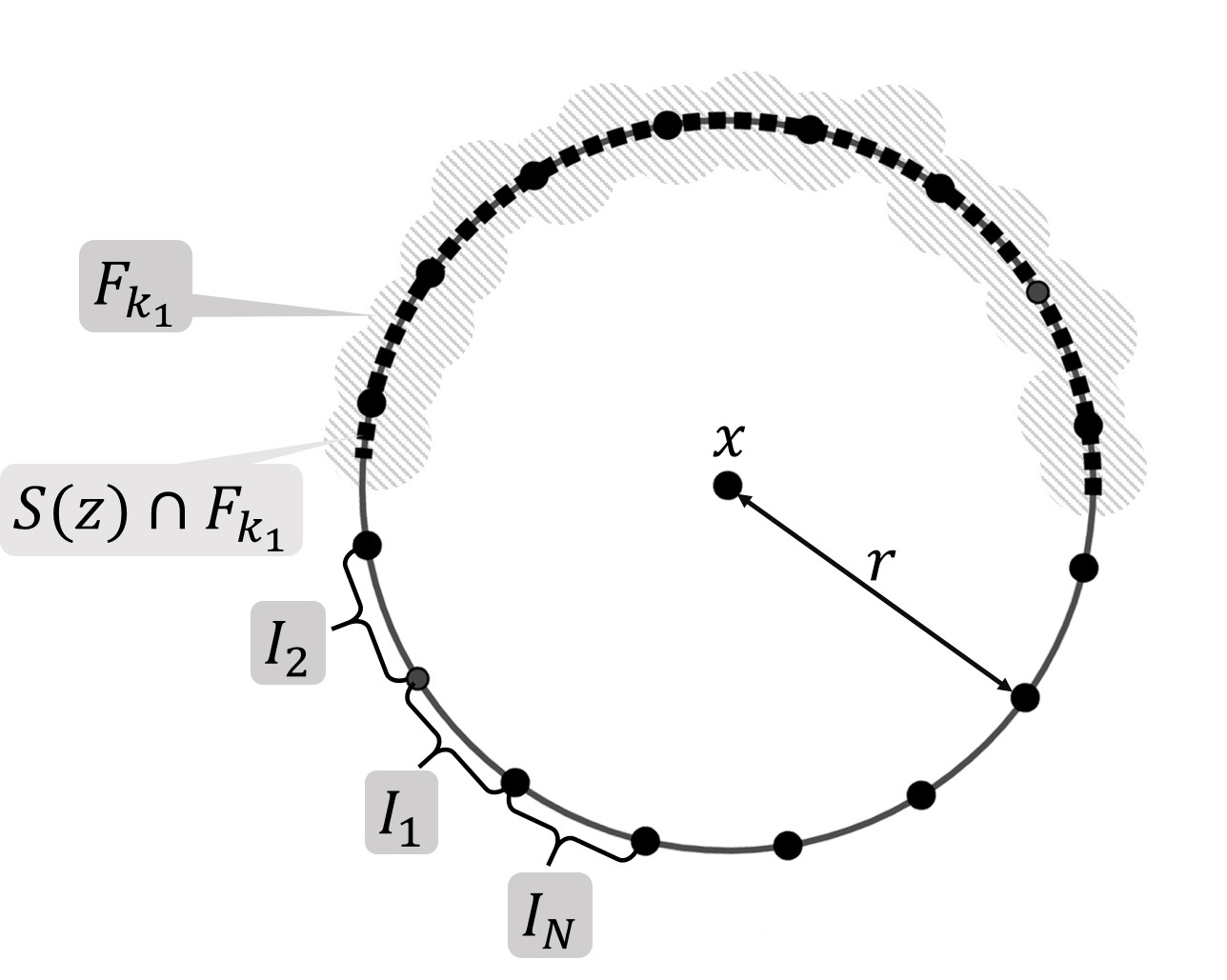}
\caption  {$N$ arcs on $S(z)$.}
\label{f0301}
\end{figure}

Since
\begin{align*}
  \eta & = \ch^{s'}_\fz(S(z) \cap F_{k_1})  \\
   & = \ch^{s'}_\fz(\cup_{l=1}^N I_l \cap F_{k_1})  \\
   & \le \ch^{s'}_\fz(\cup_{l=1}^{N-12} I_l \cap F_{k_1}) +
    \sum_{l=N-11}^{N}\ch^{s'}_\fz( I_l \cap F_{k_1}) \\
    & \le \ch^{s'}_\fz(\cup_{l=1}^{N-12} I_l \cap F_{k_1}) +
     12\frac{\eta}{16}
\end{align*}
where in the last inequality we use \eqref{cont2},
we obtain
$$  \ch^{s'}_\fz(\cup_{l=1}^{N-12} I_l \cap F_{k_1}) \ge \frac14 \eta. $$
This guarantees that there exists $N_1 \in [2,N-12]$ which is the smallest integer satisfying
\begin{equation}\label{cont3}
  \ch^{s'}_\fz(\cup_{l=1}^{N_1} I_l \cap F_{k_1}) \ge \frac18 \eta
\end{equation}
and
\begin{equation}\label{cont4}
  \ch^{s'}_\fz(\cup_{l=1}^{N_1-1} I_l \cap F_{k_1}) < \frac18 \eta .
\end{equation}
Let $h_z^+ := \cup_{l=1}^{N_1} I_l$. By \eqref{cont3} and \eqref{cont4}, we know
\begin{align}\label{eqv1}
  \frac18 \eta & \le \ch^{s'}_\fz (h_z^+) \notag \\
   & \le \ch^{s'}_\fz(\cup_{l=1}^{N_1-1} I_l \cap F_{k_1}) + \ch^{s'}_\fz( I_{N_1} \cap F_{k_1}) \notag \\
   & \le \frac18 \eta + \frac1{16} \eta = \frac3{16} \eta.
\end{align}
See Figure \ref{f0302} for the construction of $h_z^+$.
\vspace*{7pt}
\begin{figure}[h]
\hspace{-0.7cm}
\centering
\includegraphics[width=6cm]{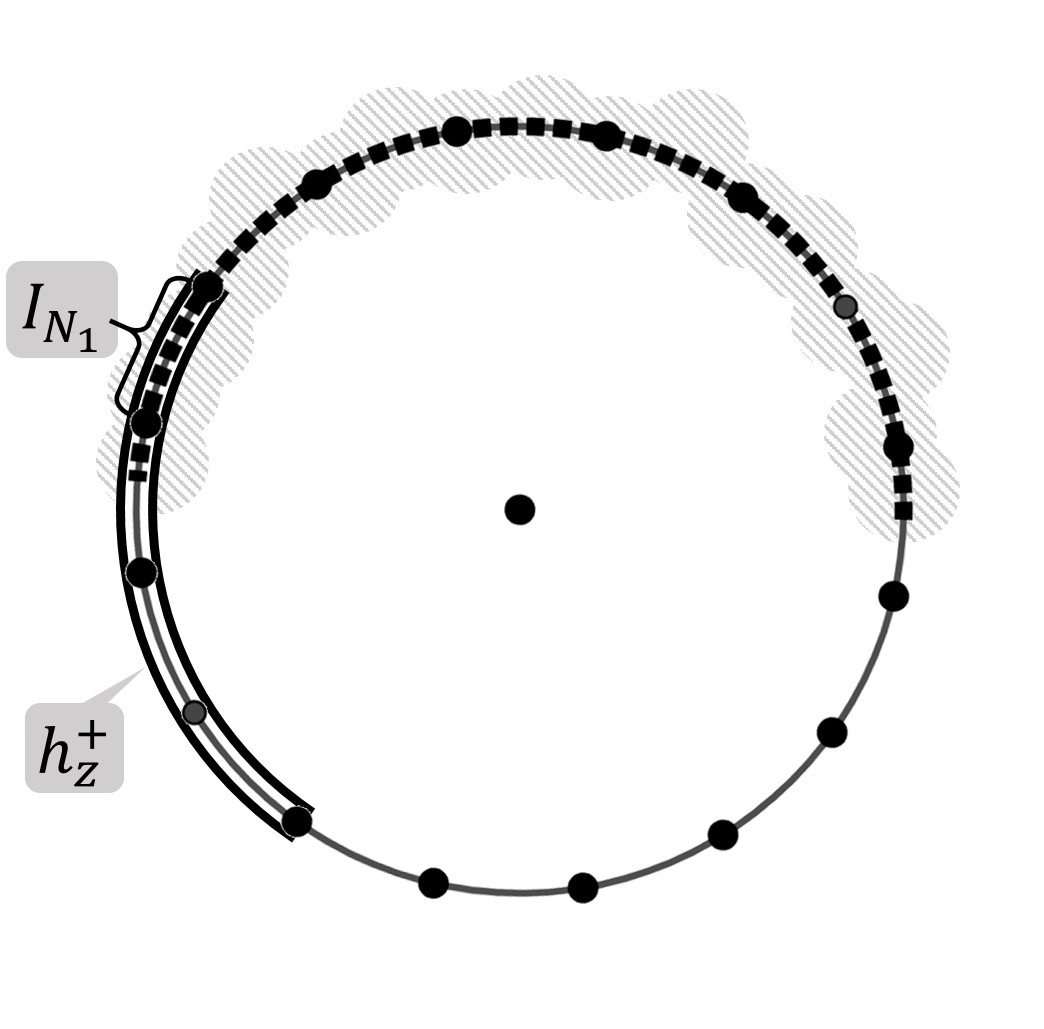}
\caption  {The construction of $h_z^+$.}
\label{f0302}
\end{figure}

Hence the arc $h_z^+$ satisfies the first inequality in \eqref{tri10}.  We continue to construct the other two arcs. Notice that
\begin{equation}\label{content1}
  \ch^{s'}_\fz(\cup_{l=1}^{N_1+1} I_l \cap F_{k_1}) \le  \ch^{s'}_\fz (h_z^+) +  \ch^{s'}_\fz( I_{N_1 +1} \cap F_{k_1}) \le \frac3{16} \eta + \frac1{16} \eta = \frac14 \eta.
\end{equation}
We remark that since $N_1 \le N-12$, we know $N_1+1 \le N-11$. Combining this, \eqref{content1} and \eqref{cont2}, we have
\begin{align*}
  \eta
   & = \ch^{s'}_\fz(\cup_{l=1}^N I_l \cap F_{k_1})  \\
   & \le \ch^{s'}_\fz(\cup_{l=1}^{N_1 +1} I_l \cap F_{k_1}) +
\ch^{s'}_\fz(\cup_{l=N_1 +2}^{N-8} I_l \cap F_{k_1})
    +  \sum_{l=N-7}^{N}\ch^{s'}_\fz( I_l \cap F_{k_1})  \\
    & \le \frac14 \eta + \ch^{s'}_\fz(\cup_{l=N_1 +2}^{N-8} I_l \cap F_{k_1}) +
     8\frac{\eta}{16},
\end{align*}
which implies
$$ \ch^{s'}_\fz(\cup_{l=N_1 +2}^{N-8} I_l \cap F_{k_1})  \ge  \frac{1}{4}\eta.$$
Hence we can find $N_2 \in [N_1 +3,N-8]$ which is the smallest integer satisfying
\begin{equation}\label{cont31}
  \ch^{s'}_\fz(\cup_{l=N_1 +2}^{N_2} I_l \cap F_{k_1}) \ge \frac18 \eta
\end{equation}
and
\begin{equation}\label{cont41}
  \ch^{s'}_\fz(\cup_{l=N_1 +2}^{N_2-1} I_l \cap F_{k_1}) < \frac18 \eta .
\end{equation}
Let $h_z^- := \cup_{l=N_1 +2}^{N_2} I_l$. By \eqref{cont31} and \eqref{cont41}, we know
\begin{align}\label{eqv2}
  \frac18 \eta & \le \ch^{s'}_\fz (h_z^-) \notag \\
   & \le \ch^{s'}_\fz(\cup_{l=N_1 +2}^{N_2 -1} I_l \cap F_{k_1}) + \ch^{s'}_\fz( I_{N_2} \cap F_{k_1}) \notag \\
   & \le \frac18 \eta + \frac1{16} \eta = \frac3{16} \eta.
\end{align}
The construction of the third arc $h_z^\times \subset S(z)$ is similar. See Figure \ref{f0303} for an illustration.
\vspace*{7pt}
\begin{figure}[h]
\hspace{-0.7cm}
\centering
\includegraphics[width=8cm]{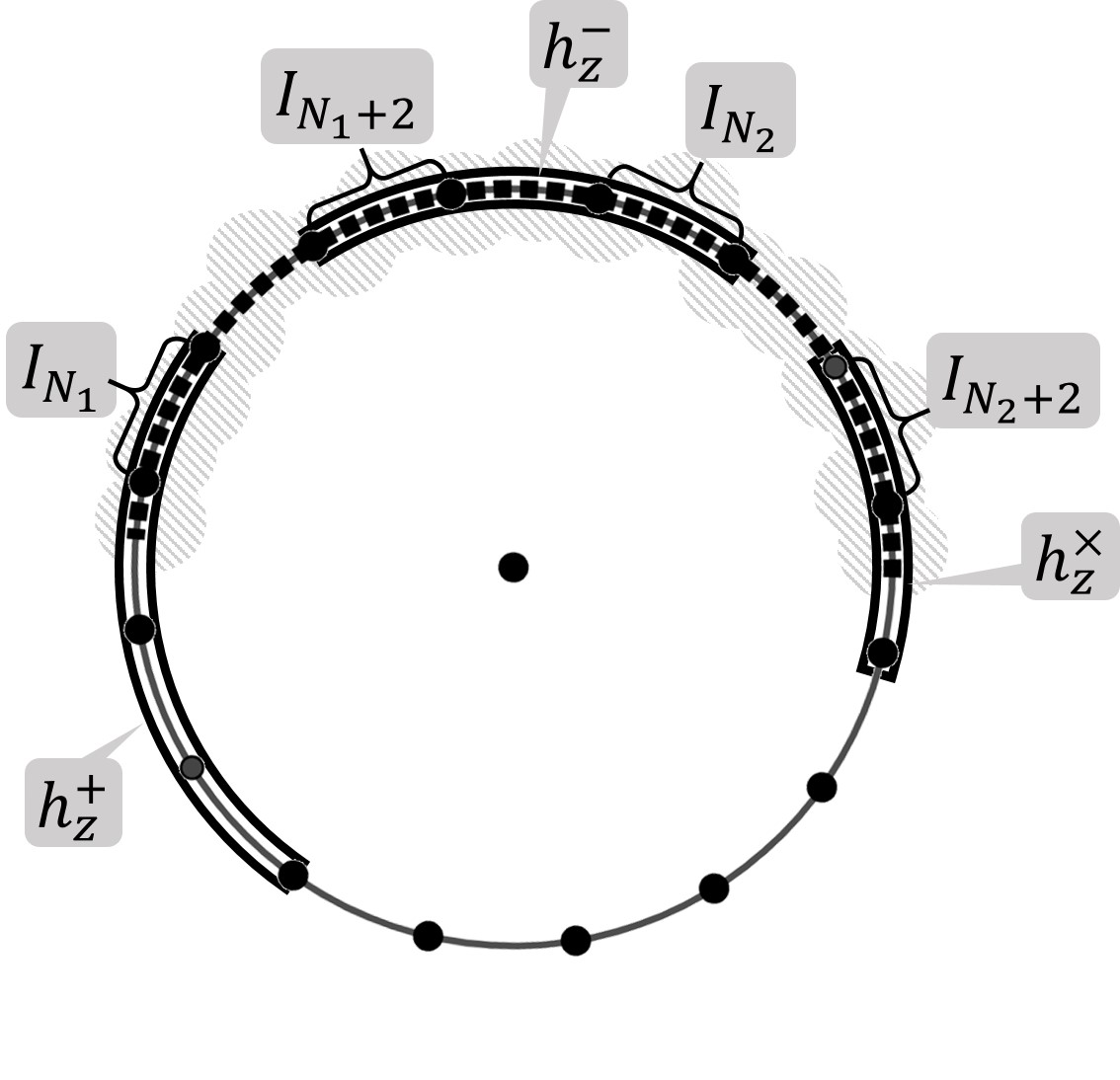}
\caption  {The construction of $h_z^-$ and $h_z^\times$.}
\label{f0303}
\end{figure}

That is, we can find $h_z^\times = \cup_{l=N_2 +2}^{N_3} I_l$ for some integer $N_3 \in [N_2 +3, N-2]$ such that
\begin{align}\label{eqv3}
  \frac18 \eta & \le \ch^{s'}_\fz (h_z^\times) \le \frac3{16} \eta.
\end{align}
We omit the details here. By the construction, it is clear that
$$ \text{dist}(h_z^+,h_z^-)= \text{diam} I_{N_1+1}, \ \text{dist}(h_z^-,h_z^\times)=  \text{diam} I_{N_2+1}, \ \text{and }  \text{dist}(h_z^+,h_z^\times)\ge  \text{diam} I_{N_3+1} .$$
Recall for any $1 \le l \le N-1$, $\ch^1(I_l)=\gz$. Hence $\text{diam} I_{l} \ge \pi^{-1} \gz$ for any $1 \le l \le N-1$.
We conclude that
$$ \min\{\text{dist}(h_z^+,h_z^-), \text{dist}(h_z^-,h_z^\times), \text{dist}(h_z^+,h_z^\times)\} \ge  \pi^{-1} \gz .$$

Therefore, for each circle $S(z)$, we have found three $\pi^{-1}\gz$-separated arcs $h^+_z,h^-_z,h^\times_z \subset S(z)$ with the property in \eqref{eqv1}, \eqref{eqv2} and \eqref{eqv3} respectively. Furthermore, recalling $\gz=(\frac{\eta}{16})^{1/s'}$ and $ \eta > \frac{1}{k_1^2} = \frac{1}{(\log \frac{1}{\dz})^2}$, we deduce that $h^+_z,h^-_z,h^\times_z $ are $\tau=\pi^{-1} (\frac{1}{16})^{1/s'}(\frac{1}{\log \frac{1}{\dz}})^{2/s'} $-separated. Hence by combining with \eqref{lbdd41}, we have showed that \eqref{tri10} holds.

We end \textit{Step 2} by defining
\begin{equation*}
\ct:= \left \{ \begin{array}{ll}
  \quad & \quad h^+_{z} \cap F_{k_1} \cap B_{i_+} \ne \emptyset,  \\
 (i_+,i_-,i_\times,z) \in \ci_{k_1} \times \ci_{k_1} \times  \ci_{k_1} \times V:  &  \quad h^-_{z} \cap F_{k_1} \cap B_{i_-}\ne \emptyset , \\
   \quad  & \quad h^\times_{z} \cap F_{k_1} \cap B_{i_\times} \ne \emptyset
\end{array}
\right \}
\end{equation*}
where $B_{i_+}=B(x_{i_+},r_{i_+})$, $B_{i_-}=B(x_{i_-},r_{i_-})$, and $B_{i_\times}=B(x_{i_\times},r_{i_\times})$. In the following, we will write $x_+$ instead of $x_{i_+}$ for short and other lower indices will be abbreviated correspondingly.

\bigskip
\textit{Step 3.}
We estimate $\# \ct$ from above.

First we fix $i_+,i_-,i_\times$ and estimate the upper bound of the number of $z \in V$ such that $(i_+,i_-,i_\times,z) \in \ct$, where $V$ is chosen as explained above \eqref{bd11}.

To this end, we observe that a necessary condition for $(i_+,i_-,i_\times,z) \in \ct$ is that
\begin{equation}\label{inters}
  S(z) \cap B_{i_+} \ne \emptyset, \ S(z) \cap B_{i_-} \ne \emptyset, \ S(z) \cap B_{i_\times} \ne \emptyset
\end{equation}
and
\begin{equation}\label{inters01}
  \min\{ \|x_+-x_-\|, \|x_+-x_\times \|, \|x_--x_\times\|  \} \ge \tau-2\dz = \tau-\sqrt{4\dz} > \frac{\tau}{2}
\end{equation}
since $h^+_z,h^-_z,h^\times_z$ are $\tau$-separated and $B_{i_+}=B(x_+,r_+)$, $B_{i_-}=B(x_-,r_-)$, $B_{i_\times}=B(x_\times,r_\times)$ are balls of radius between $\dz/2$ and $\dz$. Moreover, in the last inequality of \eqref{inters01} we recall \eqref{para12}.

Hence we will provide an upper bound of $z$ satisfying \eqref{inters} and \eqref{inters01} in the following.
 Assume for some $z = (x,r) \in V$, \eqref{inters} holds. Then we know
\begin{equation*} 
   r - \dz \le \|x-x_+\|  \le r +  \dz, \
   r - \dz \le \|x-x_-\|  \le r +  \dz, \
   r - \dz \le \|x-x_\times\|  \le r +  \dz,
\end{equation*}
which implies

\begin{equation*}
 (x,r) \in  \Gamma:= \left \{ \begin{array}{ll}
  \quad & \quad d-\dz \le \|y-x_+\| \le d+\dz,  \\
 (y,d) \in  \rr^2\times [\frac12,2] :  &  \quad d-\dz \le \|y-x_-\| \le d+\dz, \\
   \quad  & \quad d-\dz \le \|y-x_\times\| \le d+\dz 
\end{array}
\right \}
\end{equation*}
by the fact that $z \in \textbf{B}_0$ implies $r \in [\frac12,2]$. Also by \eqref{inters01} and by $\dz < \frac{\tau^2}{640}$ from \eqref{para12}, we can apply Lemma \ref{geom3} with $\triangle ABC= \triangle x_+x_-x_\times $, $a=\dz$, $b=r$ and $c=\frac{\tau}4$ to deduce that
$$ \text{diam} \, \Gamma \lesssim \frac{\dz}{\tau^2}. $$

Recall $V$ is a $\dz$-separated set in $\textbf{B}_0 \subset \rr^3$. Then
 for any $z,z' \in V \cap \Gamma$,
 $$  B(z,\frac{\dz}{3}) \cap B(z',\frac{\dz}{3}) = \emptyset,$$
 which, together with diam$(V \cap \Gamma) \lesssim \dz\tau^{-2}$, implies
 $$ \# (V \cap \Gamma) \dz^3 \sim \# (V \cap \Gamma) |B(z,\frac{\dz}{3})| = \lf |\bigcup_{z \in V \cap \Gamma }B(z,\frac{\dz}{3}) \lr|  \lesssim [\text{diam} \, (V \cap \Gamma)]^3  \lesssim \dz^3\tau^{-6}.  $$
 Hence $\# (V \cap \Gamma) \lesssim \tau^{-6}$.
 We can deduce that there are at most only $\lesssim \tau^{-6} $ many $z \in V$ satisfying \eqref{inters} for fixed $i_+,i_-$ and $i_\times$. As a consequence, we have
\begin{equation}\label{inters2}
  \# \ct \lesssim \# \ci_{k_1} \times \# \ci_{k_1}\times \# \ci_{k_1}\times \tau^{-6} \lesssim (\# \ci_{k_1})^3 \tau^{-6} \lesssim_{s'} (\# \ci_{k_1})^3(\log \frac{1}{\dz})^{12/s'},
\end{equation}
which completes the proof of \textit{Step 3}.

\bigskip
\textit{Step 4.}
We estimate $\#\ct$ from below. To this end,
recall $F_{k_1}=\cup_{i \in \ci_{k_1}} B(x_i,r_i)$. Hence for any $z \in V$, we have
$$h^+_{z} \cap F_{k_1} \subset \bigcup_{i \in \ci_{k_1}} B(x_i,r_i), \quad  h^-_{z} \cap F_{k_1} \subset \bigcup_{i \in \ci_{k_1}} B(x_i,r_i), \quad  h^\times_{z} \cap F_{k_1} \subset \bigcup_{i \in \ci_{k_1}} B(x_i,r_i).$$
For each $z \in V$, define
$$  \ci_{k_1}^+(z) := \{i \in \ci_{k_1} \ | \ h^+_{z} \cap B(x_i,r_i) \ne \emptyset\}, \quad  \ci_{k_1}^-(z) := \{i \in \ci_{k_1} \ | \ h^-_{z} \cap B(x_i,r_i) \ne \emptyset\}, $$
and
$$  \ci_{k_1}^\times(z) := \{i \in \ci_{k_1} \ | \ h^\times_{z} \cap B(x_i,r_i) \ne \emptyset\}. $$
 With the help of \eqref{tri10}, we have
$$ (\log\frac1{\dz})^{-2}  \lesssim_{s'} \ch_\fz^{s'}(h^+_{z} \cap F_{k_1}) \le \sum_{i \in \ci_{k_1}^+(z)} (\mbox{diam} B(x_i,r_i))^{s'} \sim  \sum_{i \in \ci_{k_1}^+(z)} \dz^{s'} \le \#\ci_{k_1}^+(z) \dz^{s'} \mbox{ for all $z \in V$}, $$
which implies
\begin{equation}\label{+}
  \#\ci_{k_1}^+(z) \gtrsim_{s'} \frac1{\dz^{s'}}(\log\frac1{\dz})^{-2} \mbox{ for all $z \in V$}.
\end{equation}
Similarly, we have
\begin{equation}\label{-}
  \#\ci_{k_1}^-(z) \gtrsim_{s'} \frac1{\dz^{s'}}(\log\frac1{\dz})^{-2} \mbox{ and } \#\ci_{k_1}^\times(z) \gtrsim_{s'} \frac1{\dz^{s'}}(\log\frac1{\dz})^{-2} \mbox{ for all $z \in V$}.
\end{equation}
On the other hand,
recalling the definition of $\ct$ in thew end of \textit{Step 2}, we know
$$ \ct = \bigcup_{z \in V} \ci_{k_1}^+(z)\times \ci_{k_1}^-(z)\times \ci_{k_1}^\times(z) \times\{z\}. $$
Employing the lower bounds in \eqref{+} and \eqref{-}, we arrive at
$$ \# \ct \ge  \min_{z \in V}\lf\{\#\ci_{k_1}^+(z) \lr \}\times \min_{z \in V} \lf \{\#\ci_{k_1}^-(z) \lr \} \times \min_{z \in V} \lf \{\#\ci_{k_1}^\times(z) \lr \} \times \# V \gtrsim_{s'} \frac1{\dz^{3s'}}(\log\frac1{\dz})^{-6} \# V.$$
Combining \eqref{lbdd21} and \eqref{bd11} we conclude
$$  \# \ct \gtrsim_{s'} \frac1{\dz^{3s'+t'}}(\log\frac1{\dz})^{-8}. $$
Recalling \eqref{inters2} we obtain
$$ \# \ci_{k_1} \gtrsim_{s'} (\frac1{\dz^{3s'+t'}}(\log\frac1{\dz})^{-8})^{1/3} (\log \frac{1}{\dz})^{-12/s'} = \frac1{\dz^{s'+t'/3}}(\log\frac1{\dz})^{-(\frac83+\frac{12}{s'})}. $$

We deduce that
\begin{align*}
  \sum_{i\in \ci} r_i^{s'+t'/3-\ez} & \ge \sum_{i\in \ci_{k_1}}  r_i^{s'+t'/3-\ez} \\
   & \gtrsim_{s'}  2^{-k_1(s'+t'/3-\ez)}\frac1{\dz^{s'+t'/3}}(\log\frac1{\dz})^{-(\frac83+\frac{12}{s'})} \\
   & \gtrsim_{s'} \dz^{-\ez}(\log\frac1{\dz})^{-(\frac83+\frac{12}{s'})} \\
   & > 1.
\end{align*}
where in the third inequality we recall $\dz=2^{-k_1}$ and in the last inequality we recall \eqref{para11}. This enables us to deduce
$$ \dim_\ch(F) \ge s'+\frac{t'}{3}- \ez$$
for any $0<s'<s$, $0<t'<t$ and $\ez>0$. Therefore,
$$ \dim_\ch(F) \ge s+\frac{t}{3}.$$
    We conclude the proof.
\end{proof}

\section{Proof of Theorem 1.3}

To show Theorem \ref{main3}, we define the multiplicity function $m^\mu_\dz(w): \rr^2 \to [0,1]$ with respect to a finite measure $\mu$ on $\rr^3$:
\begin{equation}\label{defmu0}
  m^\mu_\dz(w):= \mu(\{z \in \rr^3 \ | \ w \in S^\dz(z)\}).
\end{equation}

We recall \cite[Lemma 5.1]{kov17}, which is a variant of Schlag's weak type inequaltiy \cite[Lemma 8]{sch03} and the main lemma in \cite{w97} by Wolff:
\begin{lem} \label{low}
  Fix $t \in(0,1]$, $\dz > 0, \eta > 0, \textbf{C} \ge 1$, and $A \ge C_{\eta,\bfc,t}\cdot\dz^{-\eta}$, where $C_{\eta,\bfc,t} \ge 1$ is a
large constant depending only on $\eta, \bfc$ and $t$. Let $\mu$ be a probability measure on $\rr^3$ satisfying the
Frostman condition $\mu(B(z,r)) \le  \bfc r^t$ for all $z \in \rr^3$ and $r>0$, and with $D := \mbox{\rm spt } \mu \subset \textbf{B}_0$ where $\textbf{B}_{0}$ is defined in \eqref{b0}.
Then, for $\lz \in (0, 1]$, there is a set $G(A, \dz, \lz) \subset D$  with
$$  \mu(D\setminus G(A, \dz, \lz)) < A^{-t/3}$$
such that the following holds for all $z \in G(A, \dz, \lz)$:
$$ |S^\dz(z) \cap \{ w \ | \ m^\mu_\dz(w) \ge A^t\lz^{-2t}\dz^t\}| \le \lz |S^\dz(z)|.  $$
\end{lem}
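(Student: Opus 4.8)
The statement to prove is Lemma~\ref{low}, which the paper explicitly attributes to \cite[Lemma 5.1]{kov17} (itself a descendant of Wolff's circular maximal function estimate from \cite{w97} and Schlag's weak-type bound \cite{sch03}). Since this is a quoted external result, my plan is to reconstruct its proof from the Wolff/Schlag circle-tangency machinery rather than to invent something new.

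\medskip

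\textbf{Setup and reduction to an $L^2$ estimate.} The plan is to work with the set $E_A := \{w \in \rr^2 \ | \ m^\mu_\dz(w) \ge A^t \lz^{-2t}\dz^t\}$ of high-multiplicity points, and to bound $\int_D |S^\dz(z) \cap E_A| \, d\mu(z)$ from above; then the conclusion follows by Chebyshev, taking $G(A,\dz,\lz)$ to be the set of $z$ where $|S^\dz(z)\cap E_A| \le \lz|S^\dz(z)| \sim \lz\dz$. So the heart of the matter is to show
\begin{equation*}
  \int_D |S^\dz(z)\cap E_A|\,d\mu(z) \lesssim A^{-t/3}\cdot \lz\dz.
\end{equation*}
Writing $|S^\dz(z)\cap E_A| = \int_{\rr^2} \mathbbm 1_{E_A}(w)\mathbbm 1_{S^\dz(z)}(w)\,dw$ and using Fubini, this becomes $\int_{E_A} m^\mu_\dz(w)\,dw$. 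On $E_A$ we have $m^\mu_\dz(w) \ge A^t\lz^{-2t}\dz^t$, so it suffices to get a good upper bound on $\int_{\rr^2}(m^\mu_\dz(w))^2\,dw$ (a second moment), since $\int_{E_A} m^\mu_\dz \le (A^t\lz^{-2t}\dz^t)^{-1}\int (m^\mu_\dz)^2$. This is the standard trick: control the high-multiplicity set via an $L^2$ bound on the multiplicity function.

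\medskip

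\textbf{The $L^2$ bound via circle-tangency geometry.} Expanding the square,
\begin{equation*}
  \int_{\rr^2}(m^\mu_\dz(w))^2\,dw = \iint_{\rr^3\times\rr^3} |S^\dz(z)\cap S^\dz(z')|\,d\mu(z)\,d\mu(z').
\end{equation*}
So everything reduces to the bilinear geometric estimate on how much two $\dz$-annuli can overlap. The key fact from planar geometry (this is exactly Wolff's tangency analysis in \cite{w97}) is that two circles $S(z)$, $S(z')$ with $z=(x,r)$, $z'=(x',r')$, $z\neq z'$, are either nearly tangent internally, nearly tangent externally, or cross transversally; and
\begin{equation*}
  |S^\dz(z)\cap S^\dz(z')| \lesssim \frac{\dz^2}{(\Delta(z,z') + \dz)^{1/2}\,(\delta(z,z')+\dz)^{1/2}}
\end{equation*}
where $\Delta(z,z') := |\,\|x-x'\| - |r-r'|\,| + |\,\|x-x'\| - (r+r')\,|$ roughly measures the distance from tangency and $\delta(z,z') = \|z-z'\|$. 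One then splits the double integral dyadically according to the size of $\Delta$ and of $\delta$, and uses the Frostman condition $\mu(B(z,\rho))\le \bfc\rho^t$ together with the fact that for fixed $z$ the set of $z'$ with $\Delta(z,z')\le 2^{-j}$ and $\delta(z,z')\sim 2^{-k}$ lies in a union of a bounded number of slabs of dimensions $\sim 2^{-j}\times 2^{-k}\times 2^{-k}$ in $\rr^3$, hence has $\mu$-measure $\lesssim \bfc\, 2^{-jt}2^{-(k\cdot\text{something})}$. Summing the resulting geometric series in $j,k$ (which is where the restriction $t\le 1$ and the loss $A^{-t/3}$, or rather the $\dz$-power gains, come from) produces a bound of the form $\int (m^\mu_\dz)^2 \lesssim_{\bfc,t,\eta} \dz^{2t - \eta}$ for any $\eta>0$, once $A \ge C_{\eta,\bfc,t}\dz^{-\eta}$ absorbs the logarithmic/$\dz^{-\eta}$ losses.

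\medskip

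\textbf{Assembling the bound.} Plugging the $L^2$ bound back in:
\begin{equation*}
  \int_D |S^\dz(z)\cap E_A|\,d\mu(z) = \int_{E_A} m^\mu_\dz(w)\,dw \le \frac{1}{A^t\lz^{-2t}\dz^t}\int_{\rr^2}(m^\mu_\dz(w))^2\,dw \lesssim \frac{\dz^{2t-\eta}}{A^t\lz^{-2t}\dz^t} = \frac{\dz^{t-\eta}\lz^{2t}}{A^t}.
\end{equation*}
Now let $B := \{z\in D \ | \ |S^\dz(z)\cap E_A| > \lz|S^\dz(z)|\}$ be the bad set. Since $|S^\dz(z)| \gtrsim \dz$ for $z\in\textbf B_0$ (this is essentially Remark~\ref{rem1}(ii), used with a lower bound rather than an upper bound), we get $\mu(B)\cdot \lz\dz \lesssim \int_B |S^\dz(z)\cap E_A|\,d\mu(z) \lesssim \dz^{t-\eta}\lz^{2t}A^{-t}$, hence $\mu(B) \lesssim \dz^{t-1-\eta}\lz^{2t-1}A^{-t}$. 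Choosing the threshold in $E_A$ and the constant $C_{\eta,\bfc,t}$ appropriately — in particular using $A\ge C_{\eta,\bfc,t}\dz^{-\eta'}$ with a slightly larger $\eta'$ to swallow the $\dz^{-1-\eta}$ and the $\lz^{2t-1}$ (note $\lz\le 1$) — forces $\mu(B) < A^{-t/3}$, and we set $G(A,\dz,\lz) := D\setminus B$. The exponent bookkeeping to land exactly on $A^{-t/3}$ and exactly on the multiplicity threshold $A^t\lz^{-2t}\dz^t$ is delicate but purely a matter of tracking constants.

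\medskip

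\textbf{Main obstacle.} The genuinely hard step is the bilinear circle-overlap estimate and the dyadic summation in the tangency parameter: getting the correct power of $\dz$ out of $\iint|S^\dz(z)\cap S^\dz(z')|\,d\mu\,d\mu$ requires Wolff's sharp $\dz^2/(\Delta+\dz)^{1/2}$ bound for the intersection of two thin annuli (not just a crude $\dz^2$) and a careful use of the Frostman condition in the degenerate, near-tangent regime — this is precisely the content that makes \cite{w97} and \cite{kov17} nontrivial. Everything else (Fubini, Chebyshev, the $L^2$-to-high-multiplicity reduction, the final constant-chasing) is routine once that geometric input is in hand. Since the paper is entitled to cite \cite[Lemma 5.1]{kov17} directly, in practice I would state that the proof is found there and only sketch the reduction above; but the reduction is the part worth recording because it is what connects the multiplicity function $m^\mu_\dz$ to the classical circular maximal operator bounds.
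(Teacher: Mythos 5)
The paper does not actually prove this lemma: it is quoted verbatim from \cite[Lemma 5.1]{kov17} and used as a black box (the only original content in Section 4 around it is the remark that the Frostman hypothesis can be relaxed to radii $r>\dz$ and total mass $\le 1$). Your decision to cite the external source is therefore consistent with what the paper does. However, the reconstruction you sketch is not the proof of this lemma and, more importantly, would not yield it. Two concrete problems. First, your dyadic bilinear estimate relies on bounding the $\mu$-measure of a slab of dimensions $\sim 2^{-j}\times 2^{-k}\times 2^{-k}$ by something like $\bfc\,2^{-jt}2^{-k(\cdot)}$. The Frostman condition $\mu(B(z,\rho))\le\bfc\rho^t$ controls balls only; a thin slab of diameter $2^{-k}$ can carry measure as large as $\bfc\,2^{-kt}$ regardless of its thickness $2^{-j}$, since all the mass may sit in a single $2^{-j}$-ball inside it. This is exactly the enemy scenario (many circles mutually tangent at a common point) that forces Wolff and Schlag to replace the second-moment computation by an induction on scales combined with a bipartite tangency-counting argument via the three-circle lemma; the $L^2$ route cannot see this obstruction.

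Second, even granting an optimistic bound $\int(m^\mu_\dz)^2\lesssim\dz^{2t-\eta}$, your Chebyshev bookkeeping does not close. You arrive at $\mu(B)\lesssim A^{-t}\lz^{2t-1}\dz^{t-1-\eta}$, and you need this to be $<A^{-t/3}$ for \emph{all} $\lz\in(0,1]$ and all small $\dz$, with $A$ only as large as $C_{\eta,\bfc,t}\dz^{-\eta}$ for arbitrarily small $\eta$. The factor $\dz^{t-1-\eta}$ is an unbounded negative power of $\dz$ for $t<1$, and $\lz^{2t-1}$ is unbounded as $\lz\to0$ when $t<\frac12$; the available gain $A^{-2t/3}\le C^{-2t/3}\dz^{2t\eta/3}$ cannot absorb either. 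So this is not ``delicate constant-chasing'': the threshold $A^t\lz^{-2t}\dz^t$ and the error $A^{-t/3}$ are calibrated to the induction-on-scales argument of \cite{sch03,w97,kov17}, and no choice of exponents makes the second-moment argument reproduce them. If you want to record anything beyond the citation, the honest statement is that the reduction from the lemma to a tangency-counting estimate for pairs (really triples) of $\dz$-annuli is standard, but the counting itself is the content of the cited works and is not a Cauchy--Schwarz computation.
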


\begin{rem}
  \rm We remark that the assumptions on $\mu$ in Lemma \ref{low} can be slightly relaxed, which means we can apply Lemma \ref{low} for measures $\mu$ satisfying that
  \begin{enumerate}
    \item [$(i)$] $\mu$ is a finite measure with total mass smaller or equal to $1$ supported on $\textbf{B}_{0}$;
    \item [$(ii)$] $\mu$ enjoys Frostman condition
    $$\mu(B(z,r)) \le  \bfc r^t \mbox{  for all $z \in \rr^3$ and $r>\dz$}.$$
  \end{enumerate}

Indeed, in the proof of \cite[Lemma 5.1]{kov17}, the fact that the total measure $\mu(D)= 1$ was only used at the beginning to reduce the proof to the case that $\dz$ is small. See the first paragraph of the proof therein. Moreover, the Frostman condition was only applied to balls in $\rr^3$ with radius $\dz < r \in [C\dz, 1]$ where $C \ge 1$ in their proof. See the inequality above (5.4), the definition of $B$ below (5.22) and  inequality (5.24) therein. Hence we can reduce the assumptions in Lemma \ref{low} to $(i)$ and $(ii)$ above for the measure $\mu$.
\end{rem}

\begin{proof}[Proof of Theorem \ref{main3}.]
Let $F$ be a circular $(s,t)$-Furstenberg set with parameter set $K \subset \textbf{B}_0$.
It suffices to show, for any $\ez>0$, $\frac12<s'<s$ and $0<t'<t$,
$$ \dim_\ch(F) \ge (2s'-1)t'+s'- \ez.$$
Hence in the following, we fix $\ez, s',t'$.

Let $\az>0$ and $K_1$ be as in \eqref{lbdd11}. Now we clarify the choices of parameters appeared in the ensuing proof and we remind that all parameters are unrelated to those in the proof of Theorem \ref{main2}. First, we choose
\begin{equation}\label{para0}
  \eta = \min\{\ez /2t',  (2s'-1)/2 \}.
\end{equation}
Then there exists $\dz_0=\dz_0(\ez, s',t')>0$ such that for any $0<\dz<\dz_0$, we have
\begin{equation}\label{para6}
  \dz^{\ez-t'\eta}(\log\frac1\dz)^{6+4t'} \le \dz^{\frac{\ez}2}(\log\frac1{\dz})^{6+4t'} < 1,
\end{equation}
\begin{equation}\label{para1}
  \dz^{\frac{\eta t'}3}(\log\frac1{\dz})^{2} < \frac14,
\end{equation}
and
\begin{equation}\label{para5}
  C_{\eta,\bfc,t',s'} = (C_{\eta,\bfc,t'})^{t'}(2c_0 4^{s'})^{2t'},
\end{equation}
where $\bfc$ and $C_{\eta,\bfc,t'} \ge 1$ are the constants appeared in Lemma \ref{low} and $c_0$ is as in Remark \ref{rem1}(ii), i.e. $|S^{\dz}(x,r)| \le c_0 \dz$ for all $(x,r) \in \textbf{B}_0$.

Let $k_0$ be the smallest integer larger than $(\log\frac1{\dz_0})$ also satisfying
\begin{equation}\label{para3}
  \az>\sum_{k=k_0}^{\fz} \frac{1}{k^2}.
\end{equation}

Now, we outline the main steps of the proof. We start with an arbitrary cover  $\cu = \{ B(x_i,r_i)\}_{i \in\ci}$ of $F$
by balls of radius less than $2^{-k_0}$. In the sequel, we will derive a lower bound
$$ \sum_{i \in \ci} r_i^\sz \gtrsim_{\ez,t',s'} 1$$
with $\sz=(2t'+1)s'-t'- \ez$ independent of the choice of the particular cover. This will imply
$$ \ch^\sz(F) > 0.$$

To this end, we divide the proof into 3 steps.
Let
$$ \ci_k:= \{i \in \ci \ | \  2^{-(k+1)}<r_i \le 2^{-k} \}, \ F_k:= \{\bigcup B(x_i,r_i) \ | \ i \in \ci_k \}.$$

First, in  \textit{Step 1}, we will deduce that there exists $k_1 \ge k_0$ and a $(\dz,t')$-set $V \subset K$ with $\dz=2^{-k_1}$ such that
\begin{equation}\label{bd1}
  \frac1{k_1^2} \cdot \dz^{-t'} \lesssim \# V \lesssim \dz^{-t'} ,
\end{equation}
and for every circle $z=(x,r) \in V$, we have
\begin{equation}\label{lbdd3}
  \ch^{s'}_\fz(S(z)\cap F_{k_1})> k_1^{-2}.
\end{equation}

Next, in  \textit{Step 2}, we associate a finite measure $\mu$ supported on $V$ using \eqref{defmu}. Then we apply Lemma \ref{low} to obtain that there exists $G \subset V$ and $S_2^\dz(z)$ contained in the $\dz$-neighbourhood of $S(z)\cap F_{k_1}$, such that for every $z \in G$ and $w \in S_2^\dz(z)$,
\begin{equation}\label{share0}
   \# \{z' \in G \ | \ w \in S_2^\dz(z')\} \lesssim_{t'} C_{\eta,\bfc,t',s'} \dz^{t'(2s'-2-\eta)} (\log\frac1{\dz})^{4t'+2}.
\end{equation}

Finally, in  \textit{Step 3}, we will provide a lower bound of the cardinality $\# \ci_{k_1}$ by combining the upper bound in \textit{Step 2} as well as the lower bounds on the cardinality $\# G$ and the Lebesgue measure $|S_2^\dz(z)|$. Explicitly, we have
$$  \#\ci_{k_1} \gtrsim_{\ez,t',s'}   \frac1{\dz^{(2t'+1)s'-t'(1+\eta)}}\frac1{(\log\frac1\dz)^{6+4t'}}.  $$
This will enable us to conclude the proof.

\bigskip
\textit{Step 1.} Employing the same arguments as in \textit{Step 1} in the proof of Theorem \ref{main2}, we can deduce the existence of $k_1$ and $V \subset K_2 \subset K_1$ satisfying \eqref{lbdd3} and the first inequality in \eqref{bd1}. The second inequality in \eqref{bd1} is derived from Remark \ref{remsim}. Here, we omit the details.
%
%
%
%

\bigskip
\textit{Step 2.}
  Define $\mu_{V}$ as in \eqref{defmu} applied to $P=V$.  Then we know $\mu_{V}$ is a probability measure satisfying the Frostman condition $$\mu_{V}(B(z,r)) \le C\ch^{t'}_\fz(K_2)^{-1} r^{t'} < C k_1^{2} r^{t'} = C(\log\frac1{\dz})^2 r^{t'}$$
  for all $z \in \rr^3$ and $r>\dz$. Hence by setting $$\mu:=\frac{\mu_{V}}{(\log\frac1{\dz})^2},$$
we know that $\mu$ has total measure $(\log\frac1{\dz})^{-2}<1$, spt$\mu=V \subset  \textbf{B}_0$ and
$$\mu(B(z,r)) \le Cr^{t'} =:  \bfc r^{t'}$$
  for all $z \in \rr^3$ and $r>\dz$.

Let $m_\mu^\dz$ be the corresponding multiplicity function with respect to $\mu$ defined as in \eqref{defmu0}.

Applying Lemma \ref{low} with $t=t'$, $\dz=2^{-k_1}$, $\eta$ as in \eqref{para0}, $\mu=(\log\frac1{\dz})^{-2}\mu_{V}$, $D=V$ and
\begin{equation}\label{lambda}
  \lz=(2c_0 4^{s'}k_1^2)^{-1} \dz^{1-s'},
\end{equation}
we obtain that for $A= C_{\eta,\bfc,t'}\cdot\dz^{-\eta}$, there is a set $G=G(k_1,s',t',\ez) \subset V$  with
\begin{equation}\label{bd2}
  \mu(V \setminus G) < A^{-t'/3}
\end{equation}
such that the following holds for all $z \in G$:
\begin{equation}\label{low2}
   |S^\dz(z) \cap \{ w \ | \ m^\mu_\dz(w) \ge A^{t'}\lz^{-2t'} \dz^{t'} \}| \le \lz |S^\dz(z)|.
\end{equation}
Because $|S^\dz(z)| \le c_0 \dz$ for all $z \in \textbf{B}_0$, \eqref{low2} becomes
 \begin{equation}\label{low3}
   |S^\dz(z) \cap \{ w \ | \ m^\mu_\dz(w) \ge A^{t'}\lz^{-2t'} \dz^{t'} \}| \le c_0\lz \dz.
\end{equation}
Moreover, recalling that $\dz=2^{-k_1}$ and $k_1 \ge k_0$, we know that $0<\dz<\dz_0$. Hence by $C_{\eta,\bfc,t'} \ge 1$, \eqref{para1} and the choice of $\eta$ in \eqref{para0} we deduce
$$  A^{-\frac{t'}3} \le \dz^{\frac{\eta t'}3} \le \frac14 \frac{1}{(\log\frac1{\dz})^2}=\frac14\mu(V).$$
Hence \eqref{bd2} becomes
\begin{equation}\label{bd3}
  \mu(V \setminus G ) < \frac14\mu(V).
\end{equation}

For $z\in G$, let $S_1(z):=S(z)\cap F_{k_1}$ and $S^\dz_1(z)$ be the $\dz$-neighbourhood of $S_1(z)$. Our next goal is to substitute the right hand side term $\lz|S^\dz(z)|$ in \eqref{low2} by the term $\frac12 |S^\dz_1(z)|$ with the help of the proper choice of $\lz$ as in \eqref{lambda}. This means, in the sense of $2$-dimensional Lebesgue measure, more than half of the points in $S^\dz_1(z)$ have low multiplicity. To this end,
we claim that
\begin{equation}\label{sim2}
  |S^\dz_1(z)| \ge  \frac{1}{4^{s'}k_1^2} \dz^{2-s'}.
\end{equation}
To see \eqref{sim2}, let $P(z)$ be a maximal $2\dz$-separated set in $S_1(z)$. Then $\cup_{p \in P(z)} B(p,2\dz)$ forms a cover of $S_1(z)$. Hence
$$ \ch_\fz^{s'}(S_1(z)) \le \#P(z)(4\dz)^{s'}.  $$
which, combined with \eqref{lbdd3}, implies
$$\# P(z) \ge \ch_\fz^{s'}(S_1(z))\frac1{(4\dz)^{s'}} \ge \frac{1}{4^{s'}k_1^2}\frac1{\dz^{s'}}. $$
On the other hand, we have $\cup_{p \in P(z)} B(p,\dz) \subset S^\dz_1(z)$. Hence by $\{B(p,\dz)\}_{p \in P(z)}$ being mutually disjoint, we deduce
$$ |S^\dz_1(z)| \ge |\cup_{p \in P(z)} B(p,\dz)| = \# P \dz^2 \pi \ge \frac{\pi}{4^{s'}k_1^2}\dz^{2-s'}>\frac{1}{4^{s'}k_1^2} \dz^{2-s'},  $$
which gives \eqref{sim2}.

Noticing that $|S^\dz(z)| \le c_0 \dz$, $S_1(z) \subset S(z)$ and combining \eqref{low3} as well as \eqref{sim2}, we arrive at
\begin{equation}\label{con1}
  |S_1^\dz(z) \cap \{ w \ | \ m^\mu_\dz(w) \ge A^{t'}\lz^{-2t'} \dz^{t'}  \}| \le \lz c_0 \dz \le \lz c_0 4^{s'}k_1^2\dz^{s'-1} |S_1^\dz(z)|.
\end{equation}
Now recall $A =  C_{\eta,\bfc,t'}\cdot\dz^{-\eta}$ and $\lz=(2c_0 4^{s'}k_1^2)^{-1}\dz^{1-s'}=(2c_0 4^{s'})^{-1}(\log\frac1{\dz})^{-2}\dz^{1-s'}$.
Then \eqref{con1} becomes
\begin{equation*}
  |S_1^\dz(z) \cap \{ w \ | \ m^\mu_\dz(w) \ge C_{\eta,\bfc,t',s'} \dz^{t'(2s'-1-\eta)}(\log\frac1{\dz})^{4t'} \}| \le \frac12 |S_1^\dz(z)|
\end{equation*}
where we recall $C_{\eta,\bfc,t',s'}$ defined in \eqref{para5}.

For each $z \in G$, define the low-multiplicity set
$$S^\dz_2(z):= \{ w \in S_1^\dz(z)  \ | \  m^\mu_\dz(w) < C_{\eta,\bfc,t',s'} \dz^{t'(2s'-1-\eta)} (\log\frac1{\dz})^{4t'} \}.$$
Then we have
\begin{equation}\label{con2}
 |S^\dz_2(z)| \ge \frac12 |S_1^\dz(z)|.
\end{equation}
See Figure \ref{f7} for an illustration of $S_1(z)$, $S_1^\dz(z)$ and $S_2^\dz(z)$.
\vspace*{7pt}
\begin{figure}[h]
\centering
\includegraphics[width=15cm]{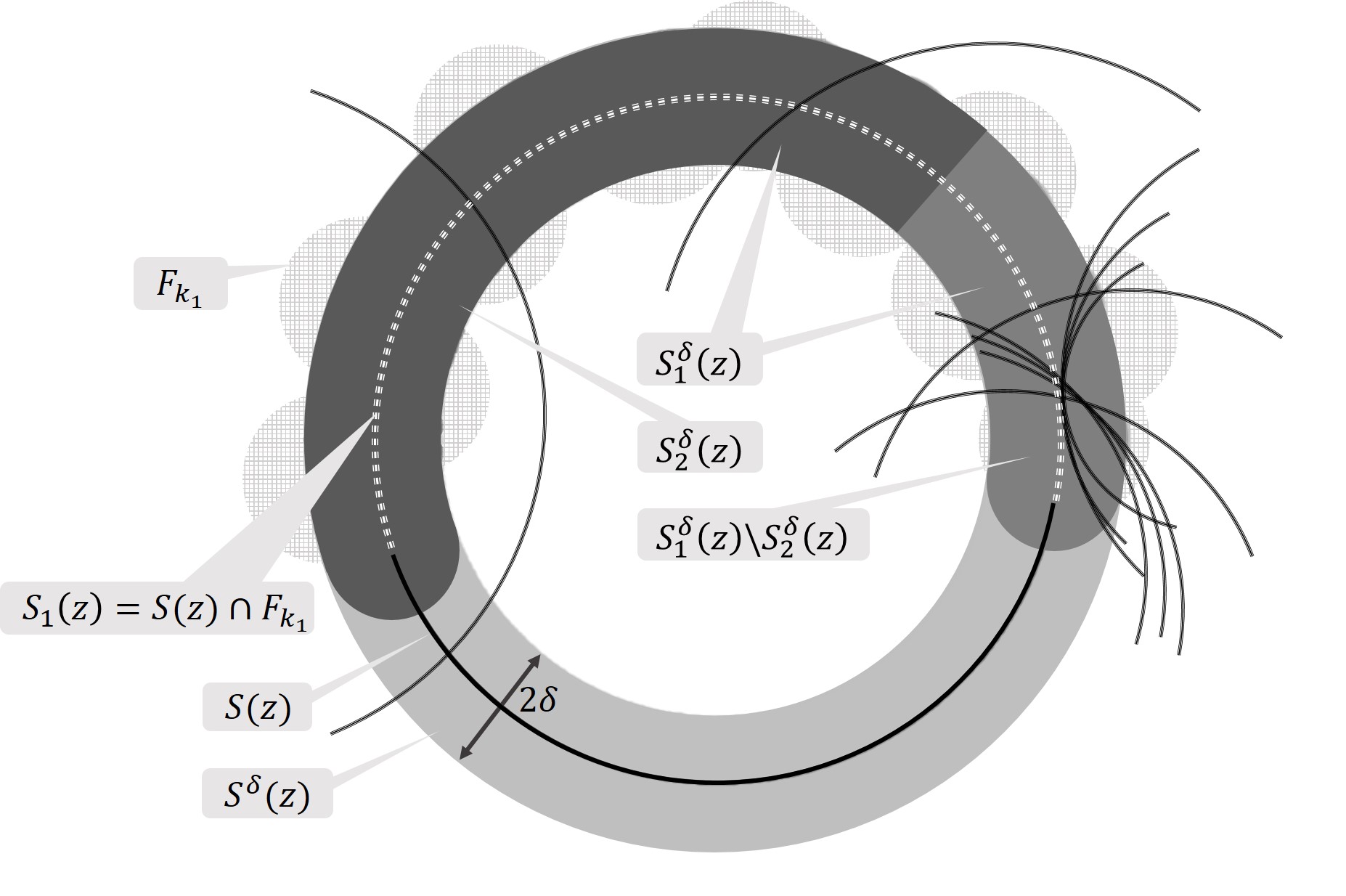}
\caption  {An illustration of $S_1(z)$, $S_1^\dz(z)$ and $S_2^\dz(z)$.}
\label{f7}
\end{figure}

Notice that $m^\mu_\dz(w) < C_{\eta,\bfc,t',s'} \dz^{t'(2s'-1-\eta)} (\log\frac1{\dz})^{4t'}$ is equivalent to
    $$ \mu(\{z' \in \rr^3 \ | \ w \in S^\dz(z')\}) < C_{\eta,\bfc,t',s'} \dz^{t'(2s'-1-\eta)} (\log\frac1{\dz})^{4t'} , $$
    which, combined with \eqref{bd1}, indicates that for $w
\in S^\dz_2(z)$, it holds
\begin{equation*}
  \# \{z' \in V \ | \ w \in S^\dz(z')\} \le \# V \cdot C_{\eta,\bfc,t',s'} \dz^{t'(2s'-1-\eta)} (\log\frac1{\dz})^{4t'+2} \lesssim_{t'} C_{\eta,\bfc,t',s'} \dz^{t'(2s'-2-\eta)} (\log\frac1{\dz})^{4t'+2}.
\end{equation*}
Furthermore, by the inclusions $G \subset V$ and $S_2^\dz(z) \subset S^\dz(z)$,
we conclude \eqref{share0}, which finishes \textit{Step 2.}

\bigskip
\textit{Step 3.} We will lower bound $\# \ci_{k_1}$ in the following. First notice that if $\{S^\dz(z)\}_{z \in G}$ were mutually disjoint, we could lower bound $\# \ci_{k_1}$ by summing up the number of balls $B_i \ (i \in \ci_{k_1})$ needed to cover each $S_2^\dz(z)$ since no ball could simultaneously intersect two of these sets. However, in general, $\{S^\dz(z) \}_{z \in G}$ may not be mutually disjoint, which needs a bit more efforts to get the lower bound of $\# \ci_{k_1}$.

Let
$$  \wz F _{k_1} : =  \bigcup_{i \in \ci_{k_1} } B(x_i, 4r_i)  .$$
We deduce that
\begin{equation}\label{ubdd1}
 \bigcup_{z \in G } S^\dz_2(z) \subset \wz F _{k_1} .
\end{equation}
Indeed, for any $w \in S^\dz_2(z)$, there exists $w' \in S(z) \cap F _{k_1}$ such that
$$  \|w-w'\| < \dz.$$
On the other hand, we know that $w' \in B(x_i,r_i)$ for some $x_i \in \ci_{k_1}$ and $r_i>2^{-(k_1+1)}=\dz/2$, which implies
$$  \|w'-x_i\| < r_i $$
and hence
$$  \|w-x_i\| < \dz+r_i < 3r_i. $$
In addition, by \eqref{bd1} and \eqref{bd3}, we can infer that
\begin{equation}\label{share3}
 \# G \gtrsim \# V \gtrsim \frac1{\dz^{t'}}\frac1{(\log\frac1\dz)^2}.
\end{equation}
Moreover by recalling \eqref{share0} we obtain that for every $w \in \bigcup_{z \in G } S^\dz_2(z) $,
$$  \mathcal N (w):= \# \{z' \in G \ | \ w \in S_2^\dz(z')\} \lesssim_{t'} C_{\eta,\bfc,t',s'} \dz^{t'(2s'-2-\eta)} (\log\frac1{\dz})^{4t'+2}$$
and hence combining \eqref{share3}, we can estimate
\begin{align}\label{ubdd2}
\lf | \bigcup_{z \in G } S^\dz_2(z) \lr| &=  \sum_{z \in G} \int \chi_{S^\dz_2(z)}(w) \frac1{\mathcal N(w) } \, dw  \notag \\
& \gtrsim_{t'} (C_{\eta,\bfc,t',s'} \dz^{t'(2s'-2-\eta)} (\log\frac1{\dz})^{4t'+2})^{-1}   \sum_{z \in G }   \lf |  S^\dz_2(z) \lr| \notag \\
& \gtrsim_{t'} (C_{\eta,\bfc,t',s'} )^{-1} \frac1{\dz^{t'(2s'-2-\eta)}}  \frac1{(\log\frac1\dz)^{4t'+2}}  \frac1{\dz^{t'}}\frac1{(\log\frac1\dz)^2} \min_{z \in G } \{|  S^\dz_2(z) |\} \notag \\
& \gtrsim _{\eta,t',s'} \frac1{\dz^{t'(2s'-2-\eta)}}  \frac1{(\log\frac1\dz)^{4t'+2}}  \frac1{\dz^{t'}}\frac1{(\log\frac1\dz)^2}  \dz^{2-s'} \frac1{(\log\frac1\dz)^2}
\end{align}
where in the last inequality we employ \eqref{sim2} and \eqref{con2}.
Therefore, combining \eqref{ubdd1} and \eqref{ubdd2} we arrive at
$$   \#\ci_{k_1} \dz^2 \gtrsim   \lf |   \wz F_{k_1}\lr|  \ge  \lf| \bigcup_{z \in G } S^\dz_2(z) \lr|  \gtrsim _{\eta,t',s'}  \frac1{\dz^{t'(2s'-2-\eta)}}  \frac1{(\log\frac1\dz)^{4t'+2}}  \frac1{\dz^{t'}}\frac1{(\log\frac1\dz)^2}  \dz^{2-s'} \frac1{(\log\frac1\dz)^2}, $$
which implies
$$  \#\ci_{k_1} \gtrsim_{\eta,t',s'}   \frac1{\dz^{(2t'+1)s'-t'(1+\eta)}}\frac1{(\log\frac1\dz)^{6+4t'}}.  $$

Since $\ci_{k_1} \subset \ci$, we deduce that
\begin{align*}
  \sum_{i\in \ci} r_i^{(2t'+1)s'-t'-\ez} & \ge\sum_{i\in \ci_{k_1}}  r_i^{(2t'+1)s'-t'-\ez} \\
   & \gtrsim_{\eta(\ez,t',s'),t',s'}  2^{-k_1((2t'+1)s'-t'-\ez)}\frac1{\dz^{(2t'+1)s'-t'(1+\eta)}}\frac1{(\log\frac1\dz)^{6+4t'}}  \\
   & \gtrsim_{\eta(\ez,t',s'),t',s'}  \dz^{t'\eta-\ez}\frac1{(\log\frac1\dz)^{6+4t'}}  \\
   & \gtrsim_{\ez,t',s'}  \dz^{-\ez/2}\frac1{(\log\frac1\dz)^{6+4t'}} \\
   & >1.
\end{align*}
where in the third inequality we recall $\dz=2^{-k_1}$ and in the fourth as well as the last inequality we recall \eqref{para6}. This enables us to deduce
$$ \dim_\ch(F) \ge (2t'+1)s'-t'- \ez$$
for any $\frac12<s'<s$, $0<t'<t$ and $\ez>0$. Therefore,
$$ \dim_\ch(F) \ge (2t+1)s-t = (2s-1)t+s.$$
    We conclude the proof.
\end{proof}

\section{Proof of Lemma 2.5}

This section is devoted to the proof of Lemma \ref{geom3}.
For the readers' convenience, we restate Lemma \ref{geom3} in the following.

\begin{lem}
  Let $A,B,C \in \rr^2$ such that $\min\{\|A-B\|, \|A-C\|, \|B-C\|\} \ge 2c$ with $c<1$. For $a>0$ such that $a < \frac1{20}c^2$, define
  \begin{equation*}
W:= \left \{ \begin{array}{ll}
  \quad & \quad b-a \le \|x-A\| \le b+a,  \\
 (x,b) \in  \rr^2 \times [\frac12,2]:  &  \quad b-a \le \|x-B\| \le b+a, \\
   \quad  & \quad b-a \le \|x-C\| \le b+a 
\end{array}
\right \}.
\end{equation*}
Then
\begin{equation*}
  \text{\rm diam} \, W \lesssim \frac{a}{c^2}.
\end{equation*}
\end{lem}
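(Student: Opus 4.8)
The statement is essentially a quantitative version of Marstrand's three-circle lemma, restricted to the case of equal "thickenings" $a$ at each of the three centers. Geometrically, the condition $(x,b)\in W$ says that the point $x\in\rr^2$ lies within distance $a$ of each of the three circles $S(A,b)$, $S(B,b)$, $S(C,b)$; equivalently, the three functions $f_A(x,b):=\|x-A\|-b$, $f_B$, $f_C$ all take values in $[-a,a]$ on $W$. The plan is to show that the map $\Phi(x,b)=(f_A-f_B,\ f_A-f_C)$ is, on the relevant region, a bi-Lipschitz-type map onto its image with lower Lipschitz bound comparable to $c^2$, so that $W\subset\Phi^{-1}(\,[-2a,2a]^2\,)$ has diameter $\lesssim a/c^2$ in the $x$-variable; the $b$-variable is then controlled because $b=\|x-A\|-f_A$ varies by $\lesssim a$ once $x$ is pinned down.

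Concretely, here is the order I would carry things out. First, reduce to bounding the $x$-diameter: if $(x,b),(x',b')\in W$ then $|b-b'|\le \big|\|x-A\|-\|x'-A\|\big| + |f_A(x,b)| + |f_A(x',b')| \le \|x-x'\| + 2a$, so it suffices to prove $\|x-x'\|\lesssim a/c^2$. Second, observe that the difference $f_A-f_B = \|x-A\|-\|x-B\|$ is independent of $b$, and its gradient in $x$ is $\frac{x-A}{\|x-A\|}-\frac{x-B}{\|x-B\|}$, the difference of the two unit vectors pointing from $A$ and from $B$ toward $x$. The key elementary fact is that for $x,x'$ both in a bounded region (here $\|x-A\|,\|x-B\|\in[\tfrac12-a,2+a]\subset[\tfrac14,3]$, say) with $\|A-B\|\ge 2c$, one has
\begin{equation*}
  \big|(f_A-f_B)(x)-(f_A-f_B)(x')\big| \gtrsim \frac{c}{1}\cdot \big|\langle x-x',\ e\rangle\big|
\end{equation*}
for a unit vector $e$ in the direction $B-A$ — this is because the level sets of $\|x-A\|-\|x-B\|$ are branches of hyperbolae with foci $A,B$, and in a fixed compact annular region away from the foci the normal speed of these level sets in the direction $B-A$ is bounded below by a constant multiple of $\|A-B\|$ relative to the distances involved; with distances $\sim 1$ this gives a lower bound $\gtrsim c$. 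Third, apply this with the pair $(A,B)$, then with $(A,C)$ (or $(B,C)$), to control the components of $x-x'$ along two directions $e_{AB}$ and $e_{AC}$. If these two directions were not too close to parallel we would immediately get $\|x-x'\|\lesssim a/c$; but $e_{AB}$ and $e_{AC}$ can be nearly parallel, and recovering the full vector $x-x'$ from its projections onto two directions at small angle $\theta$ costs a factor $1/\sin\theta$. The point is that $\sin\theta$ is itself bounded below in terms of $c$: since $A,B,C$ are pairwise $\ge 2c$-separated and all lie within distance $\lesssim 1$ of $x$, the triangle $ABC$ is "fat" — its area is $\gtrsim c\cdot(\text{something})$, more precisely one shows $\sin\angle(e_{AB},e_{AC})\gtrsim c$ using that the side lengths are $\le$ a constant and $\ge 2c$ and the height from one vertex is $\ge$ a constant multiple of $c$ (here the hypothesis $\min\{\cdots,2\}\ge 2c$, i.e. $c\le 1$, keeps everything in a fixed scale). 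Combining, $\|x-x'\|\lesssim \frac{1}{c}\cdot\frac{a}{c}=\frac{a}{c^2}$, which is exactly \eqref{geo23}; the extra factor $\frac{a}{c^2}<\frac{1}{20}$ from $a<\tfrac1{20}c^2$ confirms $W$ really is a small set and justifies staying in the compact annular region used above.

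\textbf{Main obstacle.} I expect the delicate point to be making the lower bound on the normal speed of the hyperbolic level sets $\{\|x-A\|-\|x-B\|=\text{const}\}$ honest and uniform, together with the angle estimate $\sin\angle(e_{AB},e_{AC})\gtrsim c$; both are "obvious" pictures but require care about which quantities are $\sim 1$ and which are $\sim c$, and about the fact that $x$ ranges over an $a$-neighborhood of each circle rather than exactly on it. This is presumably why the author chooses to isolate "two more auxiliary lemmas" before proving Lemma \ref{geom3}: one lemma quantifying the gradient lower bound $|\nabla_x(f_A-f_B)|\gtrsim c$ (or the level-set separation), and one lemma quantifying the angular separation of the two directions in terms of the fatness of triangle $ABC$. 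With those two in hand, the proof of the lemma is the three-paragraph argument above: reduce to $x$-diameter, project onto two directions, invert a well-conditioned $2\times 2$ system.
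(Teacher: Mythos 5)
Your overall architecture coincides with the paper's: your "gradient lower bound" for $f_A-f_B$ is exactly the content of the paper's first auxiliary lemma (the set where $\bigl|\,\|x-A\|-\|x-B\|\,\bigr|\le 2a$, intersected with a ball of radius $3$, is the region between two branches of a hyperbola with foci $A,B$ and is contained in a strip of width $\sim a/c$ about the perpendicular bisector of $AB$), and your inversion of the two projections at angle $\theta$ with cost $1/\sin\theta$ is the paper's rhombus estimate. However, there is a genuine gap in your justification of $\sin\angle(e_{AB},e_{AC})\gtrsim c$. Pairwise separation $\ge 2c$ together with diameter $\lesssim 1$ does \emph{not} make $\triangle ABC$ fat: the three points may be collinear or nearly so, in which case the "height from one vertex" you invoke is $0$, the two projection directions coincide, and your $2\times 2$ system is singular. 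Nothing in the hypotheses of the lemma excludes this. To close the gap you must bring in additional information: either (i) show that for thin triangles the two strips are disjoint, so $W=\emptyset$ and there is nothing to prove --- this is what the paper does in its Cases 1--2, after first relabelling so that the angle under consideration is the one opposite the longest side (hence $\ge\pi/3$), which reduces the matter to excluding angles near $\pi$ only; or (ii) use that $W\neq\emptyset$ forces $A,B,C$ to lie within $a$ of a common circle of radius $b\in[\frac12,2]$, and three points on such a circle with pairwise chord distance $\ge 2c$ have every inscribed angle in $[c/2,\pi-c]$, hence sine $\gtrsim c$. Either route requires an argument you have not supplied; as written, the proof fails precisely on near-degenerate configurations.

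A secondary, fixable imprecision: the displayed inequality $|(f_A-f_B)(x)-(f_A-f_B)(x')|\gtrsim c\,|\langle x-x',e\rangle|$ is false for general $x,x'$ in the compact annular region (take both points on the line through $A$ and $B$ outside the segment $AB$, where $f_A-f_B$ is locally constant). It is only valid once both points are already known to lie in the thin hyperbolic neighbourhood near the perpendicular bisector --- which is why the paper works with the explicit hyperbola equation rather than a pointwise gradient bound. You flag this yourself as the delicate point, and the conclusion you extract from it ($|\langle x-x',e_{AB}\rangle|\lesssim a/c$) is correct, but the intermediate claim as stated is not the right one to prove.
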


We briefly explain the approach. We will decompose $W$ as
$$  W = \bigcup_{b \in I \subset [1/2,2]} W(b) \times \{b\}.$$
Then for each fixed $b$,
\begin{equation*}
W(b):= \left \{ \begin{array}{ll}
  \quad & \quad b-a \le \|x-A\| \le b+a,  \\
 x \in  \rr^2 :  &  \quad b-a \le \|x-B\| \le b+a, \\
   \quad  & \quad b-a \le \|x-C\| \le b+a 
\end{array}
\right \}
=S^a(A,b) \cap S^a(B,b) \cap S^a(C,b)
\end{equation*}
is a subset in $\rr^2$ formed by the intersection of three annuli.
We will show that $W(b) \ne \emptyset$ only for $b$ ranging in a set $I$ with diameter $\lesssim \frac{a}{c^2}$. Moreover, if $W(b) \ne \emptyset$, then $A,B,C$ form a non-degenerate $\triangle ABC$ with circumcenter $M$ and $W(b)$ is contained in a rhombus centered at $M$ with diameter $\lesssim \frac{a}{c^2}$.
This will imply
$$\text{diam} \, W \lesssim \frac{a}{c^2}.$$
The above justification is contained in next two auxiliary lemmas.
In what follows, given $A, B \in \rr^2$ and $0<a<\frac{c^2}{20}$, we denote by $\mathcal{R}_{AB}^{a,c}$ the rectangle centered at the middle point of $AB$ whose short sides have length $ \frac{9a}{c}$ and long sides have length $6$ parallel to the bisector of $AB$.

\begin{lem}\label{geom1}
    Let $A,B \in \rr^2$ and $b \in [\frac12,2]$. If $c<\min\{1, \frac{\|A-B\|}{2}\}$ and $0<a<\frac{c^2}{20}<1$, then
    $$  S^a(A,b) \cap S^a(B,b)\subset \mathcal{R}^{a,c}_{AB} . $$
\end{lem}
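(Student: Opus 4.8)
The plan is to reduce the problem to elementary planar geometry about the intersection of two annuli. Fix $b \in [\frac12, 2]$ and consider a point $x \in S^a(A,b) \cap S^a(B,b)$. By definition we have $\big| \|x - A\| - b \big| \le a$ and $\big| \|x - B\| - b \big| \le a$, so in particular $\big| \|x - A\| - \|x - B\| \big| \le 2a$. The key observation is that the locus of points with $\|x-A\| = \|x-B\|$ is exactly the perpendicular bisector $\ell$ of the segment $AB$, and more generally the set $\{ x : |\,\|x-A\| - \|x-B\|\,| \le 2a\}$ is the region between two branches of hyperbolas with foci $A, B$. I want to show this region, intersected with a bounded ball (coming from $b \le 2$, so $\|x-A\| \le b+a \le 3$, forcing $x$ to lie in $B(A,3)$ say), is contained in a thin tube around $\ell$ of width comparable to $a/\|A-B\| \lesssim a/c$.

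The main computation is to estimate the distance from such an $x$ to the bisector $\ell$. Write $\|A-B\| = 2d$ with $d \ge c$. Place coordinates so that the midpoint of $AB$ is the origin and $A = (-d, 0)$, $B = (d,0)$; then $\ell$ is the $y$-axis and $\dist(x,\ell) = |x_1|$. A direct calculation gives $\|x-A\|^2 - \|x-B\|^2 = 4 d x_1$, hence $|x_1| = \frac{|\,\|x-A\|^2 - \|x-B\|^2\,|}{4d} = \frac{\big|\,\|x-A\| - \|x-B\|\,\big| \cdot \big(\|x-A\| + \|x-B\|\big)}{4d}$. The first factor is at most $2a$ and the second is at most $2(b+a) \le 6$ (using $b \le 2$ and $a < 1$), so $|x_1| \le \frac{12 a}{4d} = \frac{3a}{d} \le \frac{3a}{c}$. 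Thus $x$ lies within distance $\frac{3a}{c}$ of $\ell$. On the other hand, from $\|x - A\| \le b + a \le 3$ we get a bound on the component of $x$ along $\ell$: $|x_2|^2 = \|x-A\|^2 - (x_1 + d)^2 \le 9$, so $|x_2| \le 3$. Therefore $x$ lies in the rectangle centered at the midpoint of $AB$, with the side perpendicular to $\ell$ of length $\frac{6a}{c}$ and the side parallel to $\ell$ of length $6$. Since $\mathcal{R}^{a,c}_{AB}$ is defined to have short side $\frac{9a}{c}$ and long side $6$, and the bisector of $AB$ is exactly $\ell$ (up to the translation to the midpoint), the containment $S^a(A,b) \cap S^a(B,b) \subset \mathcal{R}^{a,c}_{AB}$ follows, with a bit of room to spare ($\frac{6a}{c} < \frac{9a}{c}$) that absorbs the fact that the short side must be centered symmetrically.

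The step I expect to require the most care is bookkeeping the constants and the centering: the rectangle $\mathcal{R}^{a,c}_{AB}$ is centered at the midpoint of $AB$ and the short side is \emph{symmetric} about $\ell$, so I must check $|x_1| \le \frac{3a}{c} \le \frac{1}{2} \cdot \frac{9a}{c}$, which holds trivially, and that the long side of length $6$ indeed covers $|x_2| \le 3$. I should also double-check that the orientation conventions match — ``parallel to the bisector of $AB$'' means parallel to $\ell$, which is the direction in which I have the coordinate $x_2$. None of this is deep; the only genuine content is the identity $\|x-A\|^2 - \|x-B\|^2 = 4 d x_1$ together with the crude upper bounds $\|x-A\|, \|x-B\| \le 3$ that come from $b \in [\frac12, 2]$ and $a$ small. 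I would present the coordinate computation cleanly and state the constant chase briefly.
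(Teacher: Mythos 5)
Your proof is correct and follows essentially the same route as the paper: place coordinates with the midpoint of $AB$ at the origin, bound the coordinate perpendicular to the bisector using $\big|\|x-A\|-\|x-B\|\big|\le 2a$, and bound the coordinate along the bisector by $3$ using $\|x-A\|\le b+a\le 3$. The only difference is cosmetic — you bound $|x_1|$ directly via the identity $\|x-A\|^2-\|x-B\|^2=4dx_1$, while the paper invokes the hyperbola $\frac{x_1^2}{a^2}-\frac{x_2^2}{u^2-a^2}=1$ and evaluates it at $|x_2|=3$; both land inside the half-width $\frac{9a}{2c}$ of $\mathcal{R}^{a,c}_{AB}$.
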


\begin{proof}
   Let $\|A-B\|=2u$. Without loss of generality, we assume $A=(-u,0)$ and $B=(u,0)$. It is easy to see that
  \begin{align*}
    & S^a(A,b) \cap S^a(B,b) \\
     & \quad = \{ x  \in \rr^2 \ | \ b-a \le \|x-A\| \le b+a, \ b-a \le \|x-B\| \le b+a \}  \\
     & \quad \subset U:= \{ x =(x_1,x_2) \in \rr^2 \ | \ \max \{\|x-A\|,\|x-B\| \}\le3, \ -2a \le \|x-A\| - \|x-B\| \le 2a \}.
  \end{align*}

 Since $u = \frac{\|A-B\|}2 > c> a$, from planar geometry we know that the set
$$\{x \in \rr^2 \ | \   \|x-A\| - \|x-B\|  = \pm 2a \}$$
consisting of points, whose absolute difference of distances to the two fixed points $A$ and $B$ is the constant $2a$,
is a hyperbola in $\rr^2$ determined by the equation
$$y(x) = y(x_1,x_2) =1$$
 where
   $y : \rr^2 \to \rr$ is defined  by
  $$ y(x)=y(x_1,x_2) \mapsto \frac{x_1^2}{a^2} - \frac{x_2^2}{u^2-a^2}.  $$
  Then we observe that
  $$\{x \in \rr^2 \ | \  -2a \le \|x-A\| - \|x-B\|  \le 2a \} = \{x  \in \rr^2 \ | \ y(x_1,x_2) \le 1 \} $$
   and hence
  \begin{align*}
     U
     & =[B((-u,0),3) \cap  B((u,0),3)] \cap \{x  \in \rr^2 \ | \ y(x_1,x_2) \le 1 \},
  \end{align*}
  which implies
  $$ U \subset \{ x  \in \rr^2 \ | \ |x_2|\le3, \  y(x_1,x_2) \le1 \}.  $$
  Figure \ref{f0201} shows the case that $u=2$ and $a=0.75$.
  \vspace*{7pt}
\begin{figure}[h]
\hspace{-0.7cm}
\centering
\includegraphics[width=13cm]{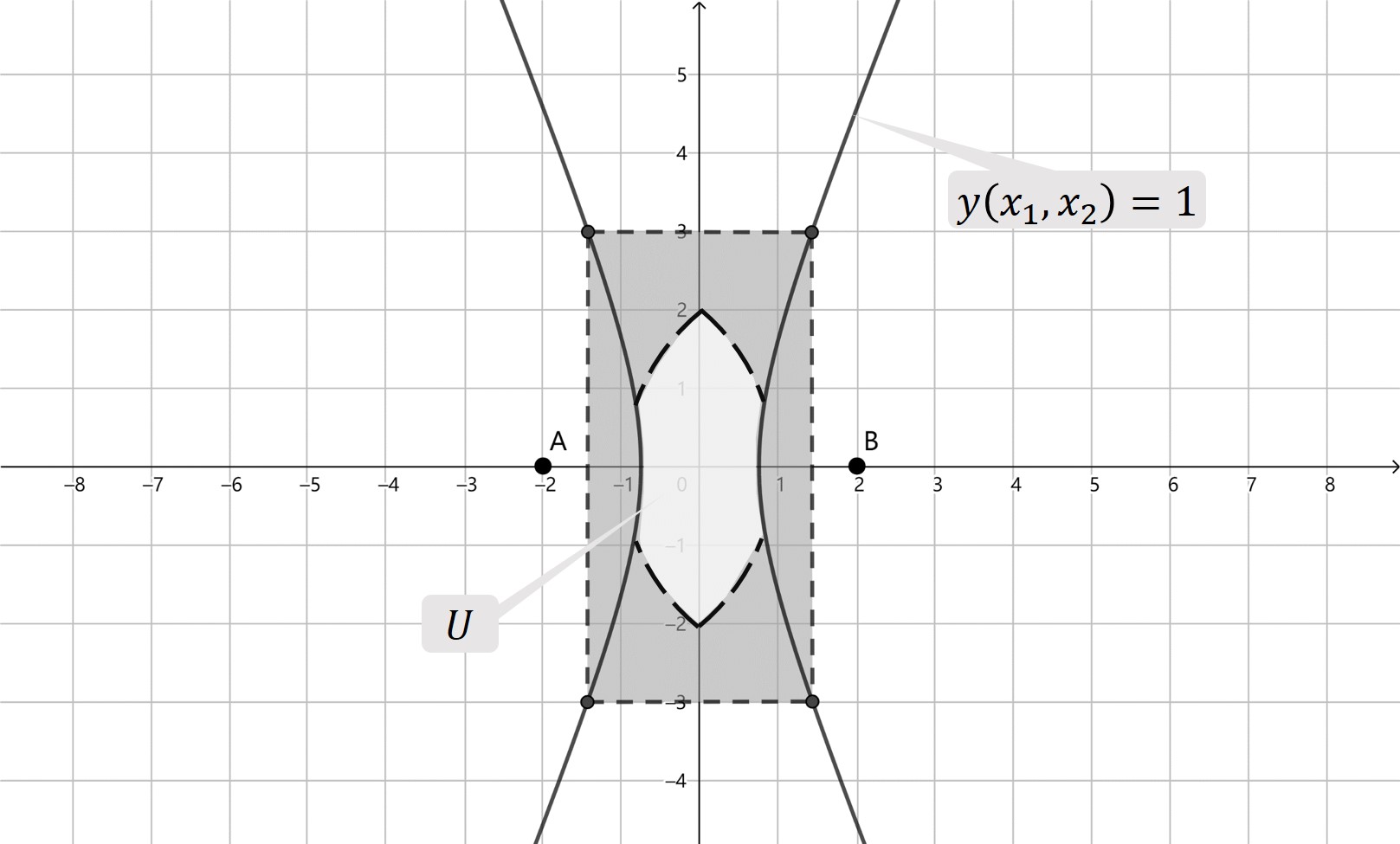}
\caption  {The case $u=2$ and $a=0.75$.}
\label{f0201}
\end{figure}

  Letting $|x_2|=3$ in the equation $y(x_1,x_2)=1$, we have $|x_1|=a\sqrt{1+\frac{9}{u^2-a^2}}$.
  Since $20a < c^2<1$ and $u>c$, it holds
\begin{equation}\label{suff12}
 a\sqrt{1+\frac{9}{u^2-a^2}} <a\sqrt{1+\frac{9}{c^2-a^2}} < a\sqrt{1+\frac{9}{\frac89 c^2}}  <  a \sqrt{\frac{81}{4 c^2}} = \frac{9}{2} \frac{a}c <3 .
\end{equation}
  This implies that the rectangle with four vertices $(\pm \frac{9}{2} \frac{a}c, \pm 3)$ has short side length $ \frac{9 a}c$ and long side length $6$. By recalling the definition of $\mathcal{R}^{a,c}_{AB}$, we have
  $$  S^a(A,b) \cap S^a(B,b) \subset U \subset \mathcal{R}^{a,c}_{AB}= \{ x  \in \rr^2 \ | \  |x_1|\le \frac{9}{2} \frac{a}c , \  |x_2|\le3 \},$$
  which concludes the proof.
\end{proof}

\begin{lem}\label{geom2}
    Let $A,B,C \in \rr^2$ such that $\min\{\|A-B\|, \|A-C\|, \|B-C\|,2\} \ge 2c$. Let $b \in [\frac12,2]$. Then for $a>0$ such that $a < \frac1{20}c^2 <b$, define
    \begin{equation}\label{geo1}
W(b):= \left \{ \begin{array}{ll}
  \quad & \quad b-a \le \|x-A\| \le b+a,  \\
 x \in  \rr^2 :  &  \quad b-a \le \|x-B\| \le b+a, \\
   \quad  & \quad b-a \le \|x-C\| \le b+a 
\end{array}
\right \}
\end{equation}
    If the triangle $\triangle ABC$ is degenerate, then
    \begin{equation}\label{g30}
      W(b) = \emptyset  \text{  for all $b \in [\frac12,2]$.}
    \end{equation}

    \noindent
    If $\triangle ABC$ is non-degenerate,
    let $M$ be the circumcenter of $\triangle ABC$ and
    $$ h:= \|M-A\|= \|M-B\|=\|M-C\|.$$
    Then,
    we have
    \begin{equation}\label{geo20}
       W(b) \subset B(M,K\frac{a}{c^2}), \text{ for all $b \in [\frac12,2]$} .
    \end{equation}

\noindent
    In addition,
    if $W(b) \ne \emptyset$, then
    \begin{equation}\label{geo3}
       b \in [h-K\frac{ a}{c^2}, h+ K\frac{ a}{c^2} \, ] \cap [\frac12,2] .
    \end{equation}
    Here in \eqref{geo20} and \eqref{geo3}, $K$ is an absolute constant.
\end{lem}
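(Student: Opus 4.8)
The plan is to use Lemma \ref{geom1} repeatedly: for each pair among $A,B,C$, the set $W(b)$ is contained in the corresponding thin rectangle $\mathcal{R}^{a,c}_{AB}$, $\mathcal{R}^{a,c}_{AC}$, $\mathcal{R}^{a,c}_{BC}$, since $W(b) \subset S^a(A,b)\cap S^a(B,b)$ and similarly for the other two pairs. Hence
$$ W(b) \subset \mathcal{R}^{a,c}_{AB} \cap \mathcal{R}^{a,c}_{AC} \cap \mathcal{R}^{a,c}_{BC}. $$
The degenerate case is then easy: if $A,B,C$ are collinear, the three bisectors are parallel lines, so the three rectangles are long thin strips all perpendicular to the common line $\ell$ containing $A,B,C$. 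One checks the three strips are pairwise disjoint because the midpoints of $AB$, $AC$, $BC$ are separated along $\ell$ by distances comparable to $c$ (indeed at least $c$, since the three mutual distances are $\ge 2c$), while each strip has width only $\frac{9a}{c} < c$ by the hypothesis $20a < c^2$. Therefore the intersection is empty, giving \eqref{g30}. Here one must be a little careful to verify the midpoint separation: if $\|A-B\|=\|A-C\|=2c$ but $B\ne C$ the midpoints of $AB$ and $AC$ could be close, but then $\|B-C\|$ is small, contradicting $\|B-C\|\ge 2c$; a short case analysis handles this.

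For the non-degenerate case, the key geometric fact is that the three bisectors of $AB$, $AC$, $BC$ all pass through the circumcenter $M$, and they are pairwise non-parallel (the bisector of $AB$ is perpendicular to $AB$, etc., and $A,B,C$ not collinear means no two sides are parallel). So $\mathcal{R}^{a,c}_{AB}$ is a rectangle of width $\frac{9a}{c}$ centered on a line through $M$, and likewise for the others. The intersection of the first two rectangles is contained in a parallelogram: it lies within distance $\sim \frac{a}{c}$ of the bisector line $\ell_{AB}$ and within distance $\sim\frac{a}{c}$ of $\ell_{AC}$, both lines passing through $M$, so it sits in a parallelogram through $M$ whose diameter is $\sim \frac{a}{c \sin\theta}$ where $\theta$ is the angle between $\ell_{AB}$ and $\ell_{AC}$ — equivalently the angle at vertex $A$ of $\triangle ABC$ (up to supplement). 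The remaining work is to show $\sin\theta \gtrsim c$: using the law of sines, $\frac{\|B-C\|}{\sin\theta} = 2h$, and since $h = \|M-A\| \le \diam\{A,B,C\} \le $ (a bounded quantity, because the radii $b$ are in $[1/2,2]$ and $W(b)\ne\emptyset$ forces $h$ comparable to $b$, hence bounded above), while $\|B-C\| \ge 2c$, we get $\sin\theta \gtrsim c$. Hence the parallelogram has diameter $\lesssim \frac{a}{c^2}$, and since it contains the center $M$, we obtain $W(b) \subset B(M, K a/c^2)$ for an absolute $K$, proving \eqref{geo20}. (I should double-check the boundedness of $h$: either argue it directly from $W(b)\ne\emptyset$ — any point $x\in W(b)$ has $\|x-A\|,\|x-B\|,\|x-C\|$ all within $a$ of $b\le 2$, so $A,B,C$ lie in a ball of radius $\le 3$, forcing $h\le 3$ — or absorb it into the constant.)

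Finally, for \eqref{geo3}: if $x \in W(b) \ne \emptyset$, then $\|x-A\|$ lies in $[b-a,b+a]$, while by \eqref{geo20} we have $\|x-M\| \le Ka/c^2$, so by the triangle inequality
$$ |\,\|M-A\| - b\,| \le |\,\|M-A\| - \|x-A\|\,| + |\,\|x-A\|-b\,| \le \|x-M\| + a \le Ka/c^2 + a \le K'a/c^2, $$
using $c<1$ to absorb $a$ into $a/c^2$. Since $h = \|M-A\|$, this gives $b \in [h - K'a/c^2,\, h + K'a/c^2]$, and of course $b \in [\frac12,2]$ by hypothesis; relabeling the constant yields \eqref{geo3}.

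I expect the main obstacle to be the non-degenerate case estimate, specifically pinning down the lower bound $\sin\theta \gtrsim c$ rigorously and handling the boundedness of the circumradius $h$ cleanly — the rest is assembling Lemma \ref{geom1} with elementary triangle geometry. The degenerate case requires a small but genuine argument (the midpoint separation) that is easy to overlook.
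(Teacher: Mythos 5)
Your overall strategy is the same as the paper's: apply Lemma \ref{geom1} to each pair, so that $W(b)$ sits inside the intersection of thin rectangles centered on the perpendicular bisectors, which (in the non-degenerate case) all pass through the circumcenter $M$. Your degenerate case and your derivation of \eqref{geo3} from \eqref{geo20} are fine. The gap is exactly where you suspected it: the lower bound $\sin\theta\gtrsim c$ for the angle at $A$, equivalently the upper bound on the circumradius $h$. Neither of your proposed justifications works. The inequality $h\le\diam\{A,B,C\}$ is false for obtuse triangles, and ``$A,B,C$ lie in a ball of radius $3$, forcing $h\le 3$'' is also false: three pairwise $2c$-separated but nearly collinear points fit in a small ball yet have arbitrarily large circumradius (e.g.\ $(0,0)$, $(1,\epsilon)$, $(2,0)$ has $h\approx 1/(2\epsilon)$). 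Your remaining fallback, ``$W(b)\ne\emptyset$ forces $h$ comparable to $b$,'' is precisely \eqref{geo3}, which you later deduce from \eqref{geo20}; using it here is circular. So for non-degenerate but very flat triangles your parallelogram has diameter $\sim a/(c\sin\theta)$, which can be far larger than $a/c^2$, and the proof does not close.

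The paper closes this hole with an extra case split on the angle $\angle A$ (taking $BC$ to be the longest side, so $\angle A\ge\pi/3$). If $\angle A\in(\pi-\arctan(2c/9),\pi)$, then the circumcenter is so far away that the two rectangles $\mathcal{R}^{a,c}_{AB}$ and $\mathcal{R}^{a,c}_{AC}$ --- which have bounded length $6$, not infinite length --- lie on opposite sides of the line through $A$ and $M$ and do not reach it, hence are disjoint; so $W(b)=\emptyset$ and \eqref{geo20} is vacuous. Only in the complementary case $\angle A\in[\pi/3,\pi-\arctan(2c/9)]$ does one get $\sin\angle A\ge c/18$, and there the rhombus $\mathcal{V}^d_{L_1}\cap\mathcal{V}^d_{L_2}$ centered at $M$ has diameter $\le 324\,a/c^2$. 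To repair your proof you need this (or an equivalent) argument showing that whenever $\sin\angle A\lesssim c$ the rectangle intersection is already empty; the finite length of the rectangles is essential and your argument never uses it.
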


\begin{proof}
   Without loss of generality, we assume the side $BC$ of $\triangle ABC$ has maximal length. Then $ \angle A:=\angle BAC \ge \pi/3$. Since $W(b)=S^a(A,b) \cap S^a(B,b) \cap S^a(C,b)$,
   from Lemma \ref{geom1} we know
   \begin{equation}\label{g31}
     W(b)  \subset \mathcal{R}_{AB}^{a,c} \cap \mathcal{R}_{AC}^{a,c}.
 \end{equation}
   Below we estimate diam$(\mathcal{R}_{AB}^{a,c} \cap \mathcal{R}_{AC}^{a,c})$ from above.

   Denote by $L_1$ and $L_2$ the bisector of $AB$ and $AC$ respectively. Hence $D:=L_1 \cap AB$ is the middle point of $AB$ and $E:=L_2 \cap AC$ is the middle point of $AC$. See Figure \ref{f0202} for an illustration.
     \vspace*{7pt}
\begin{figure}[ht]
\hspace{-0.7cm}
\centering
\includegraphics[width=10cm]{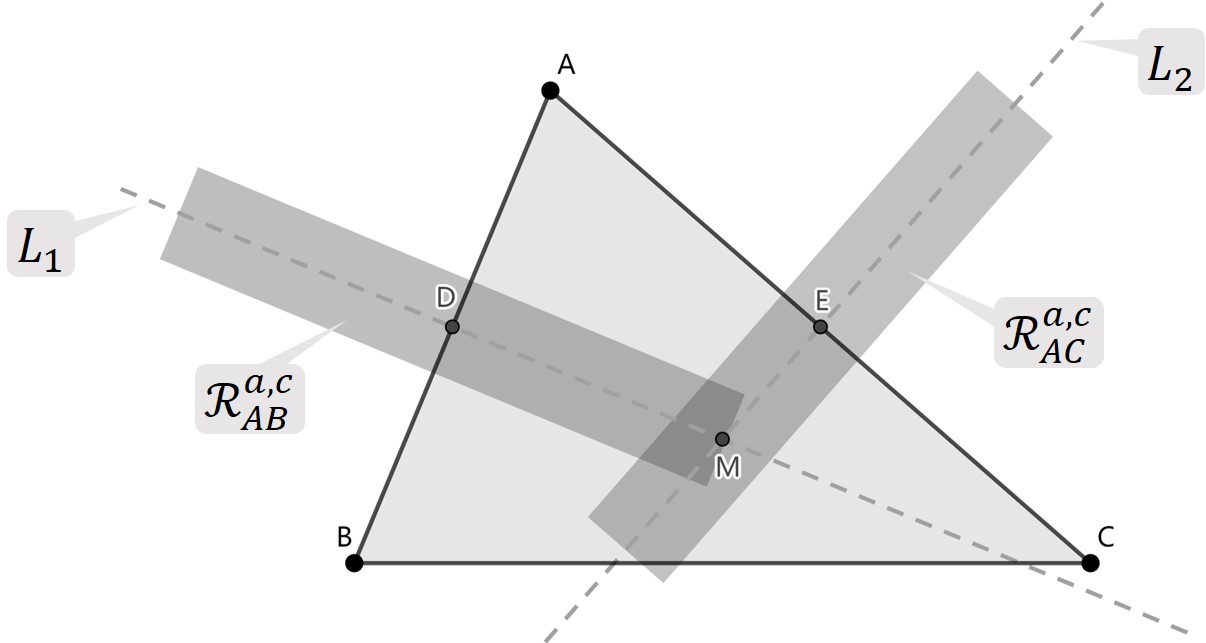}
\caption  {An illustration for $L_1$, $L_2$, $\mathcal{R}_{AB}^{a,c}$ and $\mathcal{R}_{AC}^{a,c}$.}
\label{f0202}
\end{figure}

   Let $d=\frac{9}{2} \frac{a}c$. Since $20a <c^2$, we have
   \begin{equation}\label{suff2}
     d=\frac{9}{2} \frac{a}c < \frac{9}{40}c <\frac14 c .
   \end{equation}

\emph{Case 1:} $\angle A =\pi$. That is, $\triangle ABC$ degenerates.
    By \eqref{suff2}, it is easy to see $\mathcal{R}_{AB}^{a,c} \cap \mathcal{R}_{AC}^{a,c} = \emptyset$, which, with help of \eqref{g31}, implies
    $$ W(b)= \emptyset \text{  for all $b \in [\frac12,2]$.} $$
    That is, \eqref{g30} holds.

     \bigskip
   \emph{Case 2:} $\angle A \in (\pi- \arctan (2c/9),\pi)$. We will show that
   \begin{equation}\label{suff01}
     \mathcal{R}_{AB}^{a,c} \cap \mathcal{R}_{AC}^{a,c} = \emptyset.
   \end{equation}
    Denote $\|A-B\|=2u$ and $\|A-C\|=2v$. Since $M$ is the circumcenter of $\triangle ABC$, it is the intersection of lines $L_1$ and $L_2$. Then the line $L_3$ passing through $A$ and $M$ divides $\rr^2$ into two connected components. Since the center $D$ of $\mathcal{R}_{AB}^{a,c}$ and the center $E$ of $\mathcal{R}_{AC}^{a,c}$ are contained in different connected components above and $d<\frac14 c$ by \eqref{suff2}, a sufficient condition for $\mathcal{R}_{AB}^{a,c} \cap \mathcal{R}_{AC}^{a,c} = \emptyset$ is that
   \begin{equation}\label{suff1}
     \mathcal{R}_{AB}^{a,c} \cap L_3 = \emptyset \text{ and }  \mathcal{R}_{AC}^{a,c} \cap L_3 = \emptyset.
   \end{equation}
   See Figure \ref{f0203} for an illustration.
 \vspace*{7pt}
\begin{figure}[h]
\hspace{-0.7cm}
\centering
\includegraphics[width=10cm]{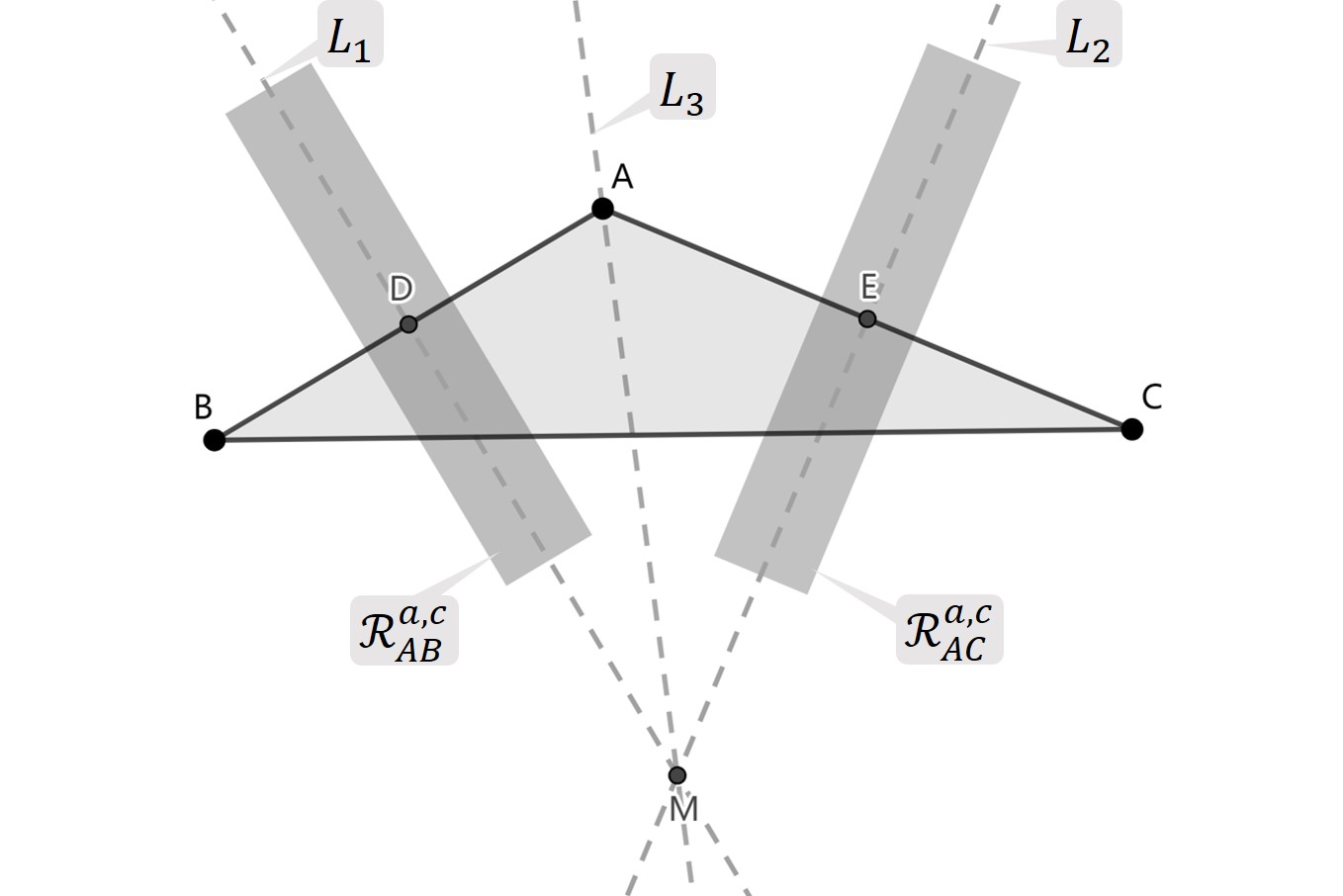}
\caption  {An illustration for \emph{Case 2}.}
\label{f0203}
\end{figure}

    Recall that half of the length of the short sides of $\mathcal{R}_{AB}^{a,c}$ and $\mathcal{R}_{AC}^{a,c}$ is $d=\frac92 \frac{a}c$.
   By assumption $\angle A \in (\pi- \arctan (2c/9),\pi)$, this implies $ \angle DMA + \angle EMA \le \arctan (2c/9)$. Hence
   \begin{equation}\label{suff3}
     \tan \angle DMA < \frac{2c}{9} \le \frac{c-d}{3}  \le \frac{u-d}{3} \text{ and } \tan \angle EMA < \frac{2c}{9} \le \frac{c-d}{3}  \le \frac{v-d}{3}
   \end{equation}
  where in the second inequality we apply $d <\frac{c}3$ from  \eqref{suff2}.
   Now we explain how \eqref{suff3} implies \eqref{suff1}. Let $D'$ be the intersection of the line segment $AD$ and the long side of the triangle $\mathcal{R}_{AB}^{a,c}$.
  Also, let $L_1' : = L_1 + (D'-D)$. That is, line $L_1'$ is the translation of line $L_1$ by the vector $D'-D$ in $\rr^2$. Denote the intersection of $L_1'$ and $L_3$ by $M'$. See Figure \ref{f0207} for an illustration.
   \vspace*{7pt}
\begin{figure}[htb]
\hspace{-0.7cm}
\centering
\includegraphics[width=10cm]{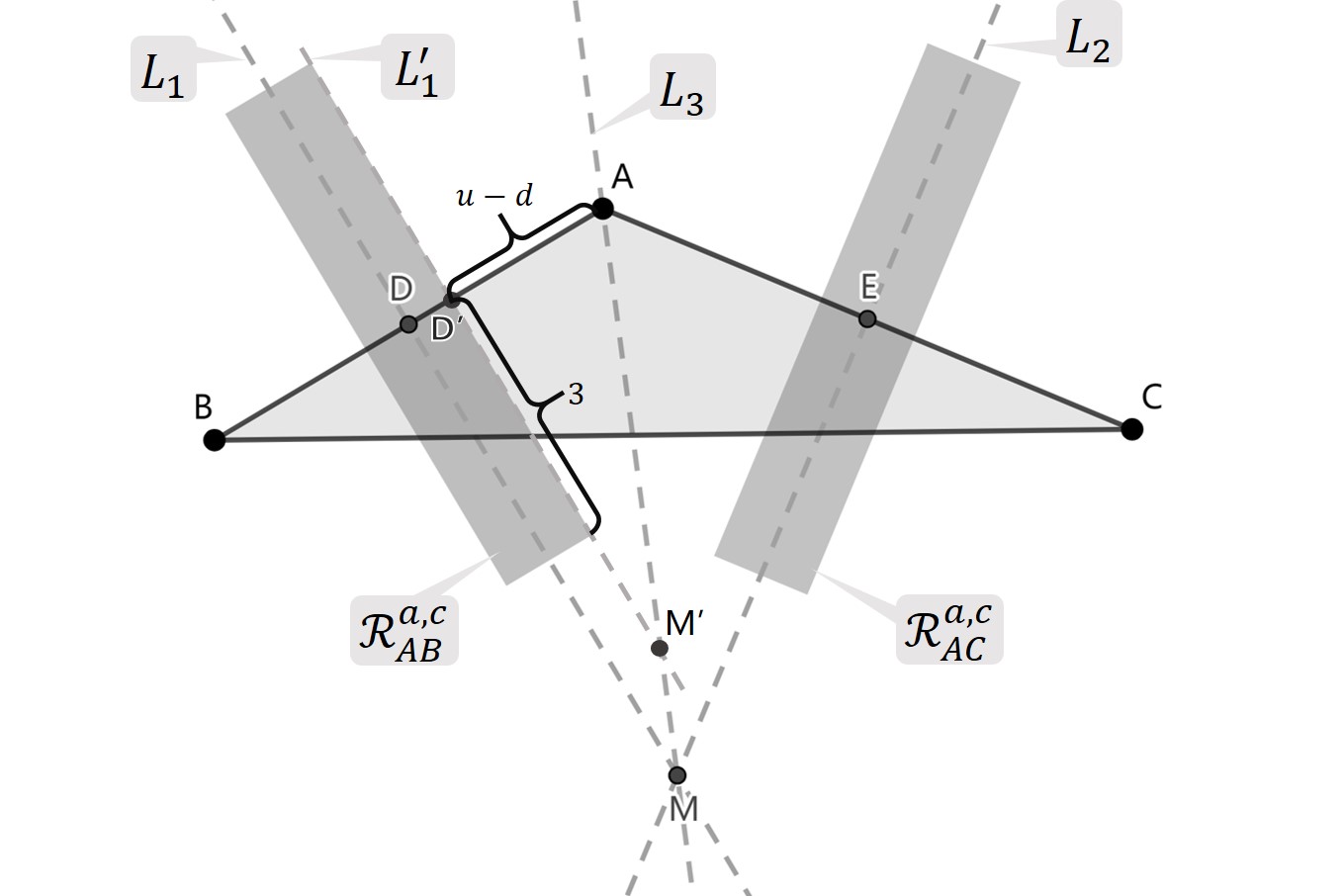}
\caption  {An illustration for $D'$, $M'$ and $L_1'$.}
\label{f0207}
\end{figure}
  We observe that
  \begin{equation}\label{angle}
    \angle D'M'A = \angle DMA \mbox{ and } \tan \angle D'M'A = \frac{\|A-D'\|}{\|D'-M'\|} = \frac{u-d}{\|D'-M'\|}
  \end{equation}
  where in the last inequality we recall that $\|A-D'\|= \|A-D\|-\|D-D'\|$, $\|A-D\|=u$ and $\|D-D'\|=d$.
  Combining \eqref{suff3} and \eqref{angle}, we deduce that
  $$  \frac{u-d}{\|D'-M'\|} \overset{\eqref{angle}}{=} \tan \angle D'M'A \overset{\eqref{suff3}}{<} \frac{u-d}{3},  $$
  which implies
  $$ \|D'-M'\| > 3 . $$
  This, combined with the fact that half of the length of the long sides of $\mathcal{R}_{AB}^{a,c}$ is 3, shows that
  $$ \mathcal{R}_{AB}^{a,c} \cap L_3 = \emptyset. $$
  By a similar argument, we also have $ \mathcal{R}_{AC}^{a,c} \cap L_3 = \emptyset$ with the help of \eqref{suff3}.
  This shows that \eqref{suff1} is true and hence \eqref{suff01} holds.

\bigskip
\emph{Case 3:} $\angle A \in [\pi/3, \pi- \arctan (2c/9)]$. In this case, $W(b)$ may not be empty. Now, we assume that $W(b) \ne \emptyset$, which implies that
$ \mathcal{R}_{AB}^{a,c} \cap \mathcal{R}_{AC}^{a,c} \ne \emptyset$. Moreover, denote by $\mathcal{V}^{d}_{L_i}$ the closed $d$-neighbourhood of lines $L_i$, $i=1,2$. Then $\mathcal{V}^{d}_{L_1} \cap \mathcal{V}^{d}_{L_2}$ is a rhombus $\mathcal{T}_M$ centered at $M$ satisfying $\mathcal{R}_{AB}^{a,c} \cap \mathcal{R}_{AC}^{a,c} \subset \mathcal{T}_M$.
We will show that
\begin{equation}\label{geom40}
  \text{diam}\,  \mathcal{T}_M \le 324\frac{a}{c^2}.
\end{equation}
See Figure \ref{f0204} for an illustration.
 \vspace*{7pt}
\begin{figure}[h]
\hspace{-0.7cm}
\centering
\includegraphics[width=10cm]{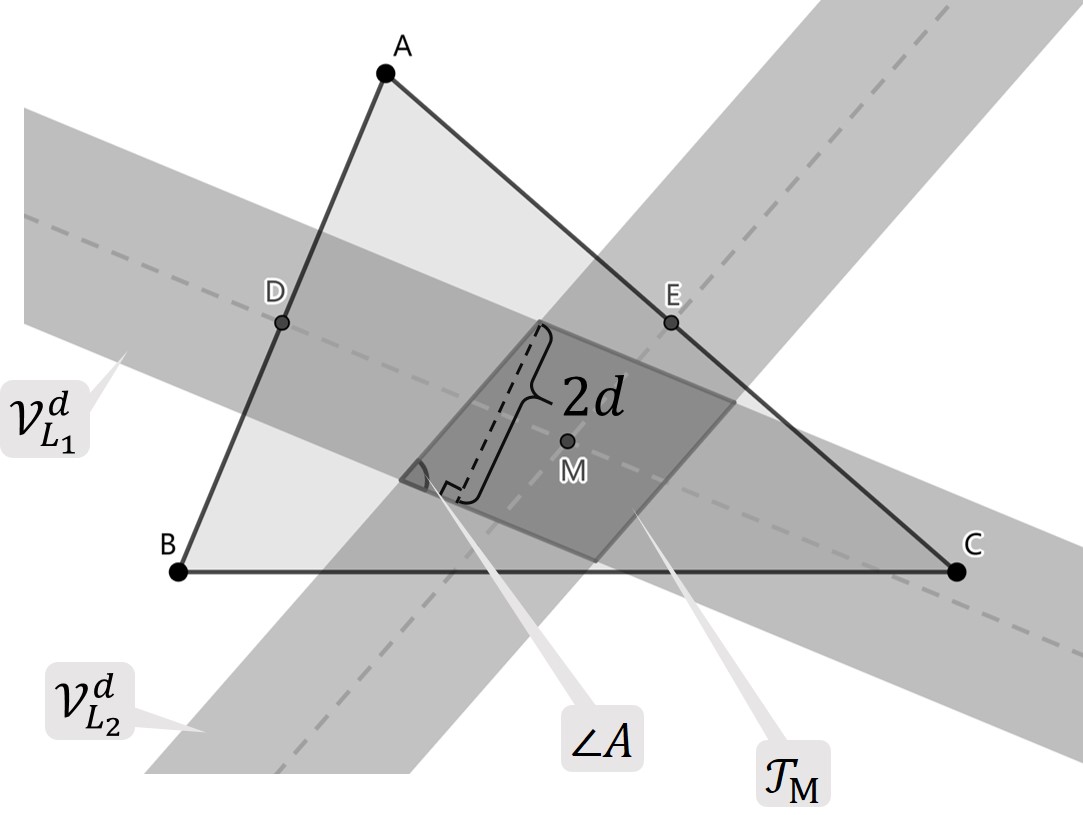}
\caption  {An illustration for the estimate $\|  y -M\|$.}
\label{f0204}
\end{figure}

Denote the length of two diagonals of $\mathcal{T}_M$ by $d_1$ and $d_2$ and the the length of four sides of $\mathcal{T}_M$ by $l$.
We have
\begin{equation} \label{geom41}
\text{diam}\, \mathcal{T}_M = \max \{d_1,d_2\},
 \end{equation}
\begin{equation} \label{geom42}
d_1^2+d_2^2 = 4l^2
 \end{equation}
and
\begin{equation} \label{geom43}
 l = \frac{2d}{\sin \angle A}.
 \end{equation}
Since $\angle A \in [\pi/3, \pi- \arctan (2c/9)]$, we have
\begin{equation} \label{geom44}
 \sin \angle A \ge \sin(\arctan \frac{2c}{9}) \ge \sin   \frac{c}9 \ge \frac{c}{18}
 \end{equation}
 where in the second last inequality we use the fact that $\arctan y > \frac {y}2$ if $0<y<1$ and in the last inequality we use the fact that $\sin y > \frac {y}2$ if $0<y<1$.

Combining \eqref{geom41}, \eqref{geom42},  \eqref{geom43} and \eqref{geom44},
we obtain
\begin{equation} \label{geom45}
\text{diam}\, \mathcal{T}_M \le 2l \le \frac{72d}{c} = 324\frac{a}{c^2}
 \end{equation}
where in the last equality we recall $d= \frac{9}{2} \frac{a}c$. Therefore, we conclude \eqref{geom40}.

Combining \emph{Case 2} and \emph{Case 3}, we conclude \eqref{geo20}.

\medskip
Finally, we show \eqref{geo3}.
Let $x \in W(b)$. By \eqref{geo1} and \eqref{geom41}, we have
$$ |b-h| = |b-\|M-A\|| \le |b-\|x-A\|| + \|x-M\| \lesssim a+ \frac{ a}{c^2}\lesssim \frac{ a}{c^2}. $$
The proof is complete.
\end{proof}

Now, we are in a position to show:

\begin{proof}[Proof of Lemma \ref{geom3}]
    For $b \in [\frac12,2]$, define
    \begin{align*}
\wz W(b)& := W(b) \times \{b\} \\
  & = \left \{ \begin{array}{ll}
  \quad & \quad b-a \le \|x-A\| \le b+a,  \\
 (x_1,x_2,x_3)=(x,x_3) \in  \rr^3 :  &  \quad b-a \le \|x-B\| \le b+a, \ x_3=b \\ 
   \quad  & \quad b-a \le \|x-C\| \le b+a, 
\end{array}
\right \}.
\end{align*}

First we assume $\triangle ABC$ degenerates. Then by \eqref{g30}, we know $$\wz W(b) = \emptyset \text{ for all $b \in [\frac12,2]$}.$$
Hence the lemma holds for this case.

\medskip
Next, we assume $\triangle ABC$ is non-degenerate.
Then by \eqref{geo20}, we have
\begin{equation}\label{geo21}
        \wz W(b) \subset B((M,b), K\frac{ a}{c^2}) \cap \{x_3=b\}\subset \rr^3, \text{ for all $b \in [\frac12,2]$},
    \end{equation}
    where $M$ is the circumcenter of the triangle $\triangle ABC$.

Since $\wz W(b) \ne \emptyset$ implies $ h- K\frac{ a}{c^2}\le b \le h+K\frac{ a}{c^2}$ from Lemma \ref{geom2}, we know
\begin{equation}\label{geo22}
  W \subset \bigcup_{\{b \, | \, \wz W(b) \ne \emptyset \} }\wz W(b) \subset \bigcup_{b \in [h- K\frac{ a}{c^2},h+K\frac{ a}{c^2}]}\wz W(b).
\end{equation}
 Then combining \eqref{geo21} and \eqref{geo22}, we deduce
\eqref{geo23}, i.e.
$$  \text{diam} \, W \lesssim \frac{ a}{c^2},$$
 which finishes the proof.
\end{proof}

\noindent Jiayin Liu

\noindent Department of Mathematics and Statistics, University of Jyv\"{a}skyl\"{a}, P.O. Box 35
(MaD), FI-40014, Jyv\"{a}skyl\"{a}, Finland

\noindent{\it E-mail }:  \texttt{jiayin.mat.liu@jyu.fi}

\end{document}